\newtheorem{theorem}{Theorem}[section]
\newtheorem{lemma}[theorem]{Lemma}
\newtheorem*{theorem*}{Theorem}
\theoremstyle{definition}
\newtheorem{definition}[theorem]{Definition}
\newtheorem{fact}[theorem]{Fact}
\newtheorem{claim}[theorem]{Claim}
\newtheorem{corollary}[theorem]{Corollary}
\theoremstyle{remark}
\newtheorem*{remark*}{Remark}
\newtheorem{remark}[theorem]{Remark}
\numberwithin{equation}{section}
\newcommand {\Z} {\mathbb{Z}}
\newcommand {\bbS} {\mathbb{S}}
\newcommand {\niceC} {\mathcal{C}}
\newcommand {\niceF} {\mathcal{F}}
\newcommand {\niceG} {\mathcal{G}}
\newcommand {\niceO} {\mathcal{O}}
\newcommand {\niceS} {\mathcal{S}}
\newcommand {\niceT} {\mathcal{T}}
\newcommand {\niceU} {\mathcal{U}}
\newcommand {\niceV} {\mathcal{V}}
\newcommand {\lien} {\mathfrak{n}}
\newcommand {\liem} {\mathfrak{m}}
\newcommand {\liep} {\mathfrak{p}}
\newcommand {\lieq} {\mathfrak{p}}
\begin{document}

\title{Reciprocity laws and $K$-theory}
\author{Evgeny Musicantov, Alexander Yom Din}

\date{}
\maketitle

\begin{abstract}

We associate to a full flag $\niceF$ in an $n$-dimensional variety $X$ over a field $k$, a "symbol map" $\mu_{\niceF}:K(F_X) \to \Sigma^n K(k)$. Here, $F_X$ is the field of rational functions on $X$, and $K(\cdot)$ is the $K$-theory spectrum. We prove a "reciprocity law" for these symbols: Given a partial flag, the sum of all symbols of full flags refining it is $0$. Examining this result on the level of $K$-groups, we derive the following known reciprocity laws: the degree of a principal divisor is zero, the Weil reciprocity law, the residue theorem, the Contou-Carr\`{e}re reciprocity law (when $X$ is a smooth complete curve) as well as the Parshin reciprocity law and the higher residue reciprocity law (when $X$ is higher-dimensional).

\end{abstract}

\pagebreak
\tableofcontents
\pagebreak

%%%%%%%%%%%%%%%%%%%%%%%%%%%%%%%%%%%%%%%%%%%%%%%%%%%%%%%%%%
%%%%%%%%%%%%%%%%%%%%%%%%%%%%%%%%%%%%%%%%%%%%%%%%%%%%%%%%%%
%%%%%%%%%%%%%%%%%%%%%%%%%%%%%%%%%%%%%%%%%%%%%%%%%%%%%%%%%%
%%%%%%%%%%%%%%%%%%%%%%%%%%%%%%%%%%%%%%%%%%%%%%%%%%%%%%%%%%
%%%%%%%%%%%%%%%%%%%%%%%%%%%%%%%%%%%%%%%%%%%%%%%%%%%%%%%%%%
%%%%%%%%%%%%%%%%%%%%%%%%%%%%%%%%%%%%%%%%%%%%%%%%%%%%%%%%%%

\section{Introduction}

\subsection{Overview}

Several statements in number theory and algebraic geometry are "reciprocity laws". Let us consider, as an example, the Weil reciprocity law. Let $X$ be a complete smooth curve over an algebraically closed field $k$, and let us fix $f,g \in F_X^{\times}$, two non-zero rational functions on $X$. Given a point $p \in X$, one defines the tame symbol: $$ ( f , g )_p := (-1)^{v_p (f) \cdot v_p (g)} \frac {f^{v_p (g)}}{g^{v_p (f)}} (p).$$ Here, $v_p$ is the valuation at $p$ (order of zero). The Weil reciprocity law states that \linebreak $(f,g)_p = 1$ for all but finitely many $p \in X$, and that $\prod_{p \in X} (f,g)_p = 1$. More generally, one can describe the pattern as follows: There is a global object, exhausted by local pieces. One then associates an invariant to each local piece, as well as to the global object itself. The desired claim is then two-fold:

\begin {enumerate}[(i)]
\item ("Global is trivial") The global invariant is trivial.
\item ("Local to Global") The product of the local invariants equals the global invariant (usually this is an infinite product, and one should figure out how to make sense of it).
\end {enumerate}

In the above example, the global object is the curve $X$, which is exhausted by the local pieces -- the points of the curve. The invariant associated to a local piece is the tame symbol, while the global invariant is quite implicit.

Let us recall that the Weil reciprocity law admits a higher-dimensional analog, known as the Parshin reciprocity law (\cite[Appendix A]{Sop}; see also \ref{par:Parshin law}).

In this text we define symbol maps and prove a reciprocity law, using the machinery of algebraic $K$-theory. We then see how various reciprocity laws, such as the Parshin reciprocity law (generalizing the Weil reciprocity law), the higher residue reciprocity law (generalizing the residue theorem), the Contou-Carr\`{e}re reciprocity law, all follow from this one reciprocity law.

Let us describe our setup in more detail. Fix an $n$-dimensional irreducible variety $X$ over a field $k$ \footnote{These assumptions on $X$ and $k$ are made here merely to simplify matters, and will be relaxed below.}. By a \textit{full flag} $\niceF$ in $X$ we mean a chain of closed irreducible subvarieties $X = X^0 \supset X^1 \supset \ldots \supset X^n$, where the codimension of $X^i$ in $X$ is $i$. Given a full flag $\niceF$, we shall define a morphism of spectra (we call it a symbol map) $$ \mu_{\niceF} : K(F_X) \to \Sigma^n K (k).$$ Here $F_X$ is the field of rational functions on $X$, $K(\cdot)$ denotes the $K$-theory spectrum, and $\Sigma^n$ denotes $n$-fold suspension. By a \textit{partial flag} $\niceG$ in $X$ we mean a full flag with an element in some single codimension $0 < d \leq n$ omitted. Then, given a partial flag $\niceG$, we may consider the set $fl(\niceG)$ of full flags which refine it. The main result of this text, theorem \ref{MainTheorem}, then states:

\begin {theorem*}

Let $X$ be an $n$-dimensional irreducible variety over a field $k$. Let $$\niceG: X^0 \supset \cdots \supset X^{d-1} \supset X^{d+1} \supset \cdots \supset X^n$$ be a partial flag in $X$, with element in codimension $0<d\leq n$ omitted. In the case $d=n$, assume additionally that the curve $X^{n-1}$ is proper over $k$. Then, $$ \sum_{\niceF \in fl(\niceG)} \mu_{\niceF} = 0.$$

\end {theorem*}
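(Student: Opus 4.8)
The plan is to reduce the assertion to the vanishing of a composite of two consecutive arrows in a localization fiber sequence of $K$-theory spectra. The structural input I will use is the description of the symbol as an \emph{iterated residue}: writing $\eta_i$ for the generic point of $X^i$, the map $\mu_{\niceF}$ is the composite
\[
K(F_X)\xrightarrow{\partial_1}\Sigma K(F_{X^1})\xrightarrow{\partial_2}\cdots\xrightarrow{\partial_{n-1}}\Sigma^{n-1}K(F_{X^{n-1}})\xrightarrow{\partial_n}\Sigma^n K(\kappa(X^n))\xrightarrow{\tr}\Sigma^n K(k),
\]
where the $i$-th arrow is the $(i-1)$-fold suspension of the boundary map $\partial_i\colon K(F_{X^{i-1}})\to\Sigma K(F_{X^i})$ of the localization sequence of the one-dimensional local ring $\niceO_{X^{i-1},\eta_i}$ — one works throughout with $G$-theory ($K$-theory of coherent sheaves), using Quillen dévissage to identify the $G$-theory supported at the closed point with $K(F_{X^i})$ — and $\tr$ is the transfer along the finite extension $\kappa(X^n)/k$. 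This is the translation into $G$-theory of the Tate-categorical definition of $\mu_{\niceF}$; if one prefers, the argument below carries over directly to Tate $K$-theory, with ``$d_1\circ d_1=0$'' replaced by the vanishing of two consecutive maps in the appropriate fiber sequence.

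\medskip
Fix the partial flag $\niceG$. For every refinement $\niceF\in fl(\niceG)$, all of the maps $\partial_1,\dots,\partial_{d-1},\partial_{d+2},\dots,\partial_n$ and the final transfer are determined by $\niceG$ alone; only the two consecutive maps $\partial_d,\partial_{d+1}$ — or, when $d=n$, only $\partial_n$ together with its transfer — depend on the inserted term $X^d$. Hence $\sum_{\niceF\in fl(\niceG)}\mu_{\niceF}$ factors through $\Sigma^{d-1}\bigl(\textstyle\sum_{X^d}(\partial_{d+1}\circ\partial_d)\bigr)$ when $d<n$, and through $\Sigma^{n-1}\bigl(\textstyle\sum_{p}(\tr_p\circ\partial_{n,p})\bigr)$ when $d=n$, in each case pre- and post-composed with maps not depending on the refinement. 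It therefore suffices to prove: \emph{(a)} for $0<d<n$, the sum over all irreducible closed $X^d$ with $X^{d-1}\supset X^d\supset X^{d+1}$ of codimension $d$ in $X$ of the composites $K(F_{X^{d-1}})\xrightarrow{\partial_d}\Sigma K(F_{X^d})\xrightarrow{\partial_{d+1}}\Sigma^2 K(F_{X^{d+1}})$ vanishes; and \emph{(b)} for $d=n$, the sum over all closed points $p$ of the curve $X^{n-1}$ of the composites $K(F_{X^{n-1}})\xrightarrow{\partial_{n,p}}\Sigma K(\kappa(p))\xrightarrow{\tr_p}\Sigma K(k)$ vanishes.

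\medskip
For \emph{(a)}, the governing local object is the two-dimensional Noetherian local domain $S:=\niceO_{X^{d-1},\eta_{d+1}}$: its fraction field is $F_{X^{d-1}}$, its residue field is $F_{X^{d+1}}$, its height-one primes correspond bijectively to the admissible $X^d$, and $\partial_d,\partial_{d+1}$ for a given $X^d$ are precisely the local residue maps on $\operatorname{Spec}S$ at the corresponding prime and at the closed point. I will invoke the coniveau (Quillen) spectral sequence for $G(\operatorname{Spec}S)$, valid for any Noetherian scheme. Since $\operatorname{Spec}S$ has exactly one point in each of codimensions $0$ and $2$, dévissage identifies the first graded pieces of the coniveau filtration as $E^0\simeq K(F_{X^{d-1}})$, $E^1\simeq\bigvee_{X^d}K(F_{X^d})$, $E^2\simeq K(F_{X^{d+1}})$, and Quillen's description of the first differential identifies $E^0\to\Sigma E^1$ with $\sum_{X^d}\partial_d$ and $\Sigma E^1\to\Sigma^2E^2$ with $\sum_{X^d}\partial_{d+1}$. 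As a map out of a coproduct is the sum of its restrictions, the sum in \emph{(a)} is exactly the composite $E^0\to\Sigma E^1\to\Sigma^2E^2$ of the two first-differential maps — and this is canonically zero, being a composite of two consecutive arrows in a fiber sequence of the filtered spectrum. (That $E^1$ is a \emph{direct} sum is also what makes the infinite sum in \emph{(a)} meaningful.) Note that no properness hypothesis enters here; its role is played by the \emph{locality} of $S$, which forces $\operatorname{Spec}S$ to have a single point in codimension $2$.

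\medskip
For \emph{(b)}, put $C:=X^{n-1}$, proper over $k$ by hypothesis, with structure map $f\colon C\to\operatorname{Spec}k$. The localization sequence of $C$ at its generic point reads $G'(C)\to G(C)\to K(F_C)$, where $G'(C)$ is the $G$-theory of torsion coherent sheaves, with boundary $\delta\colon K(F_C)\to\Sigma G'(C)$; dévissage gives $G'(C)\simeq\bigvee_{p}K(\kappa(p))$, under which the $p$-component of $\delta$ is $\partial_{n,p}$. Moreover $\sum_p(\tr_p\circ\operatorname{pr}_p)\colon\bigvee_pK(\kappa(p))\to K(k)$ (projection onto the $p$-th summand followed by the transfer) is, under this identification, the restriction to $G'(C)$ of the proper pushforward $f_*\colon G(C)\to G(k)=K(k)$ — both being restriction of scalars along finite morphisms, compatibly with dévissage — and this is the only point where properness is used, via the existence of $f_*$. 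Consequently the sum in \emph{(b)} equals $\Sigma f_*\circ\Sigma\iota\circ\delta$, where $\iota\colon G'(C)\to G(C)$; and $\Sigma\iota\circ\delta=0$, being two consecutive arrows of the fiber sequence $G'(C)\xrightarrow{\iota}G(C)\to K(F_C)\xrightarrow{\delta}\Sigma G'(C)$. This proves \emph{(b)}, hence the theorem. The main obstacle is the very first step: establishing the iterated-residue description of $\mu_{\niceF}$ in the stated generality, where the local rings $\niceO_{X^{i-1},\eta_i}$ need be neither regular nor normal, so that one must work systematically in $G$-theory and apply dévissage at each stage and check agreement with the Tate-categorical symbol; once that identification is in hand, the reciprocity law is formal.
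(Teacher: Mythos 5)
Your argument has a genuine gap at exactly the step you yourself flag as ``the main obstacle'': the identification of $\mu_{\niceF}$ with the iterated one-dimensional residue composite in $G$-theory. In the paper, $\mu_{\niceF}$ is defined via the Thomason--Trobaugh codimension filtration on $K$-theory of perfect complexes with supports, with graded pieces $Q^d K(X)=\bigvee_{p} K(X_p \ on \ p)$, followed by the pushforward $I$. For a general flag the subvarieties $\overline{x_i}$ may be singular, so $K(X_p\ on\ p)$ is \emph{not} the $K$-theory of a residue field, and the d\'evissage identifications you invoke are not available in this setting (the paper's d\'evissage lemma requires regularity, and the iterated-residue description of the symbol is proved there only for regular flags, in the calculation section, and is never used in the proof of the reciprocity law). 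To carry out your plan you must pass through the natural comparison map from the $K$-theory filtration to the $G$-theory coniveau filtration and verify that it is compatible with every boundary map, with the selection maps, and with the final pushforward, while noting that source and target are unchanged ($K(F)=G(F)$, $K(k)=G(k)$ since these are regular). This is plausible, but it is a substantive verification in precisely the generality the theorem demands (arbitrary, possibly non-regular flags), and your proof defers it entirely; until it is supplied, the maps you sum in (a) and (b) have not been shown to be the $\mu_{\niceF}$ of the statement.

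Granting that identification, the remainder of your argument is correct, and it is at bottom the same mechanism as the paper's, transplanted to local models: your vanishing in case (a) is the composite of two consecutive arrows in a fiber sequence of the coniveau filtration of the two-dimensional local ring $\niceO_{X^{d-1},x_{d+1}}$, and in case (b) properness of $X^{n-1}$ is used, exactly as in the paper, to produce a pushforward through which the localization boundary factors; your handling of the infinite sum via the wedge is the ``evidence'' formalism of Appendix A. The paper's proof is shorter because it never leaves the global codimension filtration of $X$: for $d<n$ the formula for $\mu_{C(\niceG)}$ visibly contains $\Psi^{d}\circ\Psi^{d-1}=p_{d+1}\circ\partial_d\circ p_d\circ\partial_{d-1}=0$, for $d=n$ properness of $\overline{x_{n-1}}$ yields the factorization $\tilde I$ through $\Sigma S^0$ of the $(X\ on\ Y)$-version so that $i\circ\partial_0=0$ finishes, and part (2) is the formal identity $sel_{C(\niceG)^d}=\sum_{\niceF>\niceG} sel_{C(\niceF)^d}$; no comparison with residue-field $K$-theories, and no passage to $G$-theory, is needed. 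If you want to keep your route, the missing comparison is where all the real work lies.
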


\begin {remark*}
The sum figuring in the theorem is infinite, however in appendix \ref{App:AppendixA} we will make sense of it (inspired by \cite{Cl}).
\end {remark*}

In fact, it is more "correct" to additionally define a symbol map $$ \mu_{\niceG} : K(F_X) \to \Sigma^n K (k),$$ associated to a partial flag $\niceG$. The theorem then divides into two parts; that $\mu_{\niceG}$ equals zero, and that the sum of all the morphisms $\mu_{\niceF}$ for $\niceF \in fl(\niceG)$ equals $\mu_{\niceG}$.

Notice how this setup instantiates the general pattern above. A fixed partial flag is the global object, exhausted by the local pieces which are the full flags refining the given partial flag. The symbol map is the associated invariant.

In order to derive the concrete reciprocity laws promised above from this abstract one, one considers its effect on $K$-groups.

Let us note that, in principle, the symbol map between spectra appears to contain more information than its "shadows" on $K$-groups. However, in this text we have only re-obtained known reciprocity laws from it.

Let us record here that relevant and independent work has been done in \cite{BGW1} and \cite{BGW2}.

There are several further directions which one may consider. For example, one may consider the "curve" $Spec(\Z)$. Could our setup be altered, so that the Hilbert reciprocity law would fit in? For that to succeed, at least three phenomena should be addressed; The prime at infinity, ramification at the prime $2$ and the sphere spectrum, which underlies all primes. A relevant treatment of the case of $Spec(\Z)$ is in \cite{Cl}.

\subsection{Relation to $n$-Tate vector spaces}

There is a strong relation of our mechanism with the theory of $n$-Tate vector spaces. In fact, $n$-Tate vector spaces could be seen as the actual "geometric" objects that the target of our symbol map $\mu_{\niceF}$ classifies, so that, in a sense, our approach "decategorifies" the actual picture.

The technical result underlying such a connection is the following one. Let $\niceC$ be an exact category, and $Tate(\niceC)$ the exact category of "pro-ind" objects in $\niceC$, introduced by Beilinson (\cite{B2}).

\begin {theorem*}[\cite{Sa}]
$$ K(Tate(\niceC)) \approx \Sigma K(\niceC).$$
\end {theorem*}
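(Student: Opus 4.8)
The plan is to deduce the statement from two localization sequences in the $K$-theory of exact categories, using an Eilenberg swindle to kill the $K$-theory of the auxiliary pro- and ind-categories. Throughout I write $\Ind^a(\niceC)$ (resp. $\mathrm{Pro}^a(\niceC)$) for the exact category of admissible $\Ind$-objects (resp. $\mathrm{Pro}$-objects) indexed by $\N$ — countable diagrams in $\niceC$ whose transition maps are admissible monomorphisms (resp. admissible epimorphisms) — and I recall that an \emph{elementary Tate object} is an object $V$ admitting an admissible monomorphism $L \hookrightarrow V$ (a \emph{lattice}) with $L \in \mathrm{Pro}^a(\niceC)$ and $V/L \in \Ind^a(\niceC)$, giving an exact category $Tate^{el}(\niceC)$ whose idempotent completion is $Tate(\niceC)$. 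The first step is to reduce the assertion to $Tate^{el}(\niceC)$: idempotent completion changes $K$-theory only through the $K_0$-level cofinality phenomenon, so — working with non-connective $K$-theory, or after the relevant $K_0$-check — it suffices to prove $K(Tate^{el}(\niceC)) \approx \Sigma K(\niceC)$, and one may simultaneously reduce to $\niceC$ idempotent complete.

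Next, I would establish the vanishing $K(\Ind^a(\niceC)) \approx 0$ and $K(\mathrm{Pro}^a(\niceC)) \approx 0$. The category $\Ind^a(\niceC)$ admits countable coproducts, and these are exact, so the endofunctor $X \mapsto \bigoplus_{\N} X$ carries a natural isomorphism $X \oplus \bigl(\bigoplus_{\N} X\bigr) \cong \bigoplus_{\N} X$; the Eilenberg swindle then forces $K(\Ind^a(\niceC))$ to be contractible, and the argument for $\mathrm{Pro}^a(\niceC)$ is dual, using countable products.

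Now for the two localization sequences. First, I would check that $\Ind^a(\niceC)$ is a (left, say) $s$-filtering exact subcategory of $Tate^{el}(\niceC)$ in the sense of Schlichting, so that Schlichting's localization theorem yields a fiber sequence of spectra $K(\Ind^a(\niceC)) \to K(Tate^{el}(\niceC)) \to K\bigl(Tate^{el}(\niceC)/\Ind^a(\niceC)\bigr)$; by the vanishing above, $K(Tate^{el}(\niceC)) \approx K\bigl(Tate^{el}(\niceC)/\Ind^a(\niceC)\bigr)$. Then I would identify the quotient $Tate^{el}(\niceC)/\Ind^a(\niceC)$ with $\mathrm{Pro}^a(\niceC)/\niceC$: heuristically, modulo discrete (i.e. $\Ind$-) objects a Tate object is represented by its lattice, which is a $\mathrm{Pro}$-object, and any two lattices of a given Tate object differ by an object of $\niceC$ (the "index" lemma for Tate objects), so the assignment $V \mapsto (\text{lattice of } V)$ descends to a well-defined exact functor inducing the desired equivalence of localizations. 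Finally, $\niceC \hookrightarrow \mathrm{Pro}^a(\niceC)$ is $s$-filtering, giving a fiber sequence $K(\niceC) \to K(\mathrm{Pro}^a(\niceC)) \to K(\mathrm{Pro}^a(\niceC)/\niceC)$ whose middle term vanishes, whence $K(\niceC) \approx \Omega K(\mathrm{Pro}^a(\niceC)/\niceC)$, i.e. $K(\mathrm{Pro}^a(\niceC)/\niceC) \approx \Sigma K(\niceC)$. Chaining the three identifications gives $K(Tate^{el}(\niceC)) \approx \Sigma K(\niceC)$, and hence the theorem.

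I expect the main obstacle to be the identification of the Verdier-type quotient $Tate^{el}(\niceC)/\Ind^a(\niceC)$ with $\mathrm{Pro}^a(\niceC)/\niceC$ (equivalently, the direct computation of $K\bigl(Tate^{el}(\niceC)/\Ind^a(\niceC)\bigr)$): one must produce the exact functor realizing it, verify it becomes an equivalence after localization, and check all the $s$-filtering hypotheses that make Schlichting's localization theorem applicable at each stage. The subsidiary reduction from $Tate(\niceC)$ to $Tate^{el}(\niceC)$ — together with the care required to equip $Tate^{el}(\niceC)$ with the structure of a (weakly idempotent complete) exact category and to justify the cofinality bookkeeping — will also need attention, though it is less delicate.
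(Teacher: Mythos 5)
The paper does not prove this statement at all---it is imported verbatim from \cite{Sa}---so there is no internal argument to compare with; your outline is essentially Saito's proof (developed in detail also in \cite{BGW1}): Eilenberg swindles killing $K(\Ind^a(\niceC))$ and $K(\mathrm{Pro}^a(\niceC))$, Schlichting's localization theorem for s-filtering embeddings, and the lattice/common-sublattice lemma identifying the quotient, with the reduction to idempotent complete $\niceC$ and the elementary Tate category handled as you indicate. The only deviation is cosmetic: you localize away $\Ind^a(\niceC)$ and identify the quotient with $\mathrm{Pro}^a(\niceC)/\niceC$, which is the exact dual (apply the usual statement $Tate^{el}(\niceC)/\mathrm{Pro}^a(\niceC) \simeq \Ind^a(\niceC)/\niceC$ to $\niceC^{op}$ and use the invariance $K(\niceC) \simeq K(\niceC^{op})$), so the left/right handedness of the s-filtering hypotheses must be verified on your side of the duality, as you yourself flag.
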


Thus, we can say that the Tate consutrction acts as a delooping, when one passes to $K$-theory spectra.

In this paper we associate to a full flag $\niceF$ in an $n$-dimensional variety $X$ a symbol map $$\mu_{\niceF} : K(F_X) \to \Sigma^n K(k).$$ Taking the above theorem into account, one might interpret it as a map $$\mu_{\niceF} : K(F_X) \to K(Tate^n(k)),$$ where $Tate^n(k)$ is the $n$-fold application of the $Tate(\cdot)$ construction to the exact category $Vect(k)$ of finite-dimensional vector spaces over $k$. At this point, one might wonder whether this map comes from a functor $Vect(F_X) \to Tate^n(k)$. Indeed, such a functor can be constructed, and is essentially the adelic construction of \cite{B1}.

We will address and develop the above interesting ideas elsewhere.

Once again, we point out relevant work that has been done in \cite{BGW1} and \cite{BGW2}.

\subsection{Organization}

This text is organized as follows. Section \ref{Sec:Statements} contains the formulation of the abstract reciprocity law (subsection \ref{SubSec:AbstractRecLaw}) and the formulations of concrete reciprocity laws (subsection \ref{SubSect:ConcreteRecLaw}) which are obtained from the abstract reciprocity law by considering its effect on specific $K$-groups. Section \ref{Sec:ConstructionAndProof} contains the construction of the abstract symbol map (subsection \ref{SubSec:Construction}) and the proof of the abstract reciprocity law (subsection \ref{SubSec:ProofRecLaw}). Section \ref{Sec:Calc} deals with calculation of the symbol map on specific $K$-groups. In appendix \ref{App:AppendixA}, we describe how to make sense of an infinite sum of morphisms of spectra. In appendix \ref{App:AppendixB}, we state some lemmas about $K$-theory, which are used in calculations.

\subsection{Notations}

\addtocontents{toc}{\protect\setcounter{tocdepth}{1}}
\subsubsection{}
\addtocontents{toc}{\protect\setcounter{tocdepth}{3}}

We use \cite{TT} as a reference for $K$-theory of schemes. Given a Noetherian scheme $X$, $K(X)$ will denote the $K$-theory spectrum of $X$. Given a closed subset $Z \subset X$, $K(X \ on \ Z)$ will denote the $K$-theory spectrum of $X$ with supports in $Z$. By abuse of notation, given a commutative ring $A$ and an ideal $I \subset A$, we will also write $K(A)=K(X)$ and $K(A \ on \ I) = K(X \ on \ Z)$ where $X = Spec(A)$ and $Z \subset X$ is the closed subset associated to the ideal $I$.

\addtocontents{toc}{\protect\setcounter{tocdepth}{1}}
\subsubsection{}
\addtocontents{toc}{\protect\setcounter{tocdepth}{3}}

We use some notation for the scheme $X$ in this text:

\begin {itemize}

\item $n=dim(X)$ will denote the Krull dimension of $X$.

\item $|X|$ will denote the underlying topological space of $X$. The usual partial order on $|X|$ (that of "containment in the closure of") will be denoted by $\leq$. $|X|^i$ will denote the subset of $|X|$ consisting of points of codimension $i$.

\item $\gamma$ will denote the generic point of $|X|$ ($X$ will be assumed to be irreducible) -- i.e., the only point in $|X|^0$, and $F = F_X = \niceO_{X, \gamma}$ will denote the local ring at that point.

\item For $p \in |X|$, we will write $X_p := Spec(\niceO_{X,p})$. There is a canonical map $X_p \to X$. As usual, we will write $k(p)$ for the residue field of $\niceO_{X,p}$.

\item If $X$ is affine and $\lieq$ is a prime ideal in $\niceO(X)$, then $p_{\lieq} \in |X|$ will denote the corresponding point.

\end {itemize}

\subsection{Acknowledgments}

We thank our advisor Joseph Bernstein. We thank Eitan Sayag for showing interest and moral support. We thank Lior Bary-Soroker for valuable comments and suggestions. We thank Amnon Besser and Amnon Yekutieli for taking interest. We thank Roy Ben-Abraham, Efrat Bank, Adam Gal, Lena Gal for helpful comments.

%%%%%%%%%%%%%%%%%%%%%%%%%%%%%%%%%%%%%%%%%%%%%%%%%%%%%%%%%%
%%%%%%%%%%%%%%%%%%%%%%%%%%%%%%%%%%%%%%%%%%%%%%%%%%%%%%%%%%
%%%%%%%%%%%%%%%%%%%%%%%%%%%%%%%%%%%%%%%%%%%%%%%%%%%%%%%%%%
%%%%%%%%%%%%%%%%%%%%%%%%%%%%%%%%%%%%%%%%%%%%%%%%%%%%%%%%%%

\section{Statements} \label{Sec:Statements}

\subsection{The abstract reciprocity law} \label{AssumptionsOnXandB}
\label{SubSec:AbstractRecLaw}

Let $X \to B$ be a morphism of schemes. We make the following assumptions:

\begin{enumerate}

\item $B$ is Noetherian, $0$-dimensional (i.e., a finite disjoint union of $Spec$'s of local Artinian rings).

\item $X$ is Noetherian, of finite Krull dimension and irreducible.

\item\label{finitenessassumption} For every $p \in |X|^n$ (recall $n:=dim(X)$), the composition $Spec(k(p)) \to X \to B$ is a finite morphism.

\end{enumerate}

We give two examples of morphisms that satisfy the above assumptions.

\begin{enumerate}
\item $B=Spec(k)$ where $k$ is a field, and $X\to B$ is an irreducible scheme of finite type over $B$.
\item $B=Spec(k)$ where $k$ is a field, and $X=Spec(A)$ where $(A,\liem)$ is a Noetherian local integral $k$-algebra, such that $A/\liem$ is finite over $k$. $X \to B$ is the corresponding structure map.
\end{enumerate}

A convenient technical notion will be that of a collection $C$, by which we mean a family $C = (C^i)_{0 \leq i \leq n}$, where $C^i \subset |X|^i$. We only consider collections which satisfy $C^0 = \{ \gamma \}$.

Given a collection $C$ we will construct, in subsection \ref{SubSec:Construction}, a map of spectra ("symbol map"):

$$\mu_{C} : K(F) \to \Sigma^n K(B).$$

We will only consider and use collections attached to full and partial flags (to be now defined), for which we will state a reciprocity law. First, let $$\niceF: x_n < x_{n-1} < \ldots < x_0 = \gamma$$ be a full flag  of points in $|X|$ (thus, $codim (x_i) = i$). We define a collection $C(\niceF)$, by setting $C(\niceF)^i = \{x_i\}$. Second, let $$\niceG: x_n < x_{n-1} < \ldots < x_{d+1} < x_{d-1} < \ldots < x_0 = \gamma$$ be a partial flag , with the level $0 < d \leq n$ omitted. Here, we require $codim (x_i) = i$. We define a collection $C(\niceG)$, by setting $C(\niceG)^i = \{ x_i \}$ for $i \neq d$, and \linebreak$C(\niceG)^d = \{ p \in |X|^d \quad | \quad x_{d+1} < p < x_{d-1} \}$. Note that we have the obvious notion of a full flag refining a partial flag (if $C(\niceF) \subset C(\niceG)$), which we will denote by $\niceF > \niceG$. We will sometimes write $\mu_{\niceF}$ instead of $\mu_{C(\niceF)}$.

We prove the following "reciprocity" laws (for the meaning of the infinite sum in this statement, consult Appendix A).

\begin {theorem} \label{MainTheorem}

Let $\niceG$ be a partial flag with level $d$ omitted, where $0 < d \leq n$.

\begin {enumerate}

\item ("Global is trivial") $$\mu_{C(\niceG)} = 0,$$ where in case $d = n$ we should assume that $\overline {x_{n-1}}$ is proper over $B$.

\item ("Local to Global") $$\mu_{C(\niceG)} = \sum_{\niceF > \niceG} \mu_{C(\niceF)}.$$

\end {enumerate}

\end {theorem}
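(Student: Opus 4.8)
The plan is to build the symbol map $\mu_C$ for an arbitrary collection $C$ via a composition of maps between $K$-theory spectra with supports, and then reduce the theorem to local statements on the curves $\overline{x_{n-1}}$ appearing in the flags. The natural construction of $\mu_C$ should proceed by induction on codimension: starting from $K(F) = K(\niceO_{X,\gamma})$, one uses, at each level $i$, the localization sequences in $K$-theory (from \cite{TT}) relating $K$-theory with supports in the closed set $\overline{C^i} = \bigcup_{x \in C^i}\overline{x}$ to $K$-theory of the open complement, producing a connecting (boundary) map that raises the suspension degree by one. Composing $n$ such boundary maps and using the finiteness assumption \ref{finitenessassumption} (so that pushforward along $Spec(k(x_n)) \to B$ is defined) yields $\mu_C : K(F) \to \Sigma^n K(B)$. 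The key formal input is that these boundary maps, taken over a \emph{set} of points $C^i \subset |X|^i$ lying over a fixed point of $|X|^{i-1}$, assemble into a single map out of the relevant relative $K$-theory spectrum — this is exactly the content that makes the infinite sum in part (2) meaningful, and is deferred to Appendix A.

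**Reducing to the one-dimensional case.**

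The heart of the argument is that both statements are \emph{local in codimension $d$}: away from level $d$ the flag $\niceG$ and every refinement $\niceF$ agree, so the composite maps $\mu_{\niceG}$ and $\mu_{\niceF}$ factor through the same maps at levels $\neq d$. Concretely, I would isolate the two-step portion of the construction at levels $d-1, d, d+1$: after applying the boundary maps for levels $1,\dots,d-1$ we arrive at a $K$-theory spectrum with supports attached to the point $x_{d-1}$; dually, the levels $d+1,\dots,n$ (together with pushforward to $B$) contribute a fixed map at the end. So it suffices to prove the identity for the middle piece, which is governed by the one-dimensional scheme $Y := \overline{x_{d-1}}$ localized appropriately (i.e. $Spec$ of the local ring of $X$ at $x_{d-1}$, which is a one-dimensional local domain with generic point $x_{d-1}$ and closed points among the $p \in |X|^d$ with $x_{d+1} < p < x_{d-1}$ — when $d < n$ there is a distinguished such closed point, namely $x_d$, but the sum ranges over all of them). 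This reduction is the main conceptual step and I expect the bookkeeping of "which maps are shared" to be the first real obstacle.

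**The curve case.**

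For the reduced problem we have a one-dimensional Noetherian local domain (or, in the $d=n$ case, a proper curve over $B$), with function field $F' = k(x_{d-1})$, and we must show: the boundary map $K(F') \to \Sigma K(\text{closed fiber})$ composed with the remaining structural maps vanishes (part 1), and equals the sum over closed points $p$ of the corresponding local boundary maps (part 2). Part (2) here is essentially the statement that the localization-sequence boundary map for the full closed set $\{$all closed points$\}$ decomposes as the sum of the boundary maps for the individual closed points — a Mayer–Vietoris / excision assertion, made rigorous via the infinite-sum formalism of Appendix A and the fact that $K$-theory with supports in a disjoint-from-generic-point closed set splits over the connected components (points). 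Part (1) is the genuinely global input: for a \emph{proper} curve over the Artinian base $B$, the composite $K(F') \to \Sigma K(B)$ is zero. I would prove this by factoring it through $K$-theory of the whole curve $Y$: the localization sequence gives $K(Y) \to K(F') \xrightarrow{\partial} \Sigma K(Y \text{ on closed pts}) \to \Sigma K(Y)$, and the symbol $\mu_{\niceG}$ is the composite of $\partial$ with the proper pushforward $\Sigma K(Y \text{ on closed pts}) \to \Sigma K(B)$; since proper pushforward is functorial and the closed-support $K$-theory maps compatibly to $K(Y)$ which then pushes to $K(B)$, the composite of two consecutive maps in the localization cofiber sequence is nullhomotopic. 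The one subtlety — and I expect this to be the second main obstacle — is matching the pushforward-along-closed-points used in defining $\mu$ with the pushforward-along-$Y$ used to invoke the cofiber sequence; this requires the compatibility of pushforward with the localization sequence (the projection-formula-type statements collected in Appendix B), together with properness of $Y \to B$ to ensure the relevant pushforwards exist on the nose rather than only ind-/pro-categorically.
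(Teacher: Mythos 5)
Your overall architecture matches the paper at two points: part (2) is indeed proved by decomposing the level-$d$ support over its points and invoking the Appendix~\ref{App:AppendixA} formalism (in the paper this is immediate because $Q^dK(X)=\bigvee_{p\in|X|^d}K(X_p\ on\ p)$ is already a wedge, so $sel_{C(\niceG)^d}=\sum_{\niceF>\niceG}sel_{C(\niceF)^d}$ by claims \ref{claim_sum_composition} and \ref{claim_sum_projections}), and the $d=n$ case of part (1) is indeed the ``factor through the $K$-theory of the proper curve, then two consecutive arrows of a localization fiber sequence compose to zero'' argument, with properness of $\overline{x_{n-1}}$ needed to define the pushforward. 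However, your reduction step is misstated and, as written, illegitimate: $Spec(\niceO_{X,x_{d-1}})$ has dimension $d-1$, closed point $x_{d-1}$ and generic point $\gamma$; it is not a one-dimensional domain whose closed points are the $p$ with $x_{d+1}<p<x_{d-1}$ (a local ring has a unique closed point — the object with those intermediate points is the two-dimensional local ring $\niceO_{\overline{x_{d-1}},x_{d+1}}$). More seriously, replacing the spectrum with supports $K(X_{x_{d-1}}\ on\ x_{d-1})$ that actually appears at that stage by $K(k(x_{d-1}))$ (``the function field $F'$'') is a d\'evissage step requiring regularity, which the theorem does not assume. The paper never leaves the ambient $X$: everything is phrased through $S^dK(X)$, $Q^dK(X)$, $\Psi^d=p_{d+1}\circ\partial_d$ and $sel_{C^d}$, and for $d=n$ with $Y=\overline{x_{n-1}}\neq X$ the proper-curve argument is run in the $(X\ on\ Y)$-version rather than on the curve $Y$ itself. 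Your argument can be repaired by staying with supports throughout, but the reduction as you state it does not go through.

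The second, and larger, gap is part (1) for $d<n$, which your proposal does not actually prove: you frame (1) as ``the genuinely global input: for a proper curve over $B$ the composite is zero,'' but for $d<n$ there is no properness hypothesis and the vanishing is a purely local fact with a different (much shorter) reason. In the paper: since $C(\niceG)^d$ consists of all points between $x_{d+1}$ and $x_{d-1}$, the selection at level $d$ can be dropped from the formula for $\mu_{C(\niceG)}$, which therefore contains $\Psi^{d}\circ\Psi^{d-1}=p_{d+1}\circ(\partial_d\circ p_d)\circ\partial_{d-1}=0$, because $\partial_d\circ p_d$ are consecutive arrows of the fiber sequence $S^{d+1}K(X)\to S^dK(X)\xrightarrow{p_d}Q^dK(X)\xrightarrow{\partial_d}\Sigma S^{d+1}K(X)$. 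Nothing in your write-up supplies this (or an equivalent) argument — your only vanishing mechanism is proper pushforward, which is unavailable for $d<n$ — so the $d<n$ half of ``global is trivial'' is a genuine missing piece.
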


\subsection {Concrete reciprocity laws} \label{SubSect:ConcreteRecLaw}

In the following, we give examples of concrete reciprocity laws which one obtains by considering the effect of the abstract reciprocity law on various homotopy groups of the involved spectra.

\subsubsection {The case $dim(X)=1$}

Let $k$ be a field, $B=Spec(k)$ and $X \to B$ a regular, connected, proper curve over $B$. We obtain, for every closed point $p \in |X|^1$, a map $\mu_p : K(F) \to \Sigma K(B)$. Here $\mu_p = \mu_{C(\niceF)}$, where $\niceF: p < \gamma$. Applying the functor $\pi_i$, one has maps $\mu^i_p : K_i(F) \to K_{i-1}(k)$.

\paragraph {The degree law}

We have the map $\mu^1_{p} : F^{\times} \cong K_1 (F) \to K_0 (k) \cong \Z$.

\begin {claim}
The integer $\mu^1_{p} (f)$ is equal to the valuation $v_p(f)$ of $f$ at the point $p$, multiplied by $[ k(p) : k ]$.
\end {claim}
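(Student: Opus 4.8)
The plan is to unwind the construction of $\mu_{C(\niceF)}$ for the one-element flag $\niceF: p < \gamma$ and identify it with a composite of standard maps in $K$-theory, then compute on $\pi_1$. Concretely, for $X$ a regular proper curve over $k=B$ and $p$ a closed point, the symbol map $\mu_p : K(F) \to \Sigma K(k)$ should be built (as will follow from subsection \ref{SubSec:Construction}) from the localization/dévissage machinery: the boundary map $K(F) \to \Sigma K(X_p \text{ on } p)$ in the localization sequence for $X_p = Spec(\niceO_{X,p})$ with its closed point, composed with the dévissage identification $K(X_p \text{ on } p) \approx K(k(p))$ (using that $\niceO_{X,p}$ is a discrete valuation ring), followed by the proper pushforward $K(k(p)) \to K(k)$ coming from the finite morphism $Spec(k(p)) \to Spec(k)$ (this is where properness of $\overline{x_{n-1}}=X$, i.e. finiteness of $k(p)/k$, is used). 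So $\mu_p = \Sigma(f_*) \circ \Sigma(\mathrm{dev}) \circ \partial$, where $\partial$ is the localization boundary and $f: Spec(k(p)) \to Spec(k)$.

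Next I would apply $\pi_1$ and chase the definitions term by term. On $\pi_1$, the boundary map $\partial_* : K_1(F) = F^\times \to K_0(k(p))$ for a DVR with uniformizer $\pi$ sends a unit to $0$ and sends $\pi$ to the class $[k(p)] \in K_0(k(p)) \cong \Z$ (the generator $1$); in general $f \mapsto v_p(f)$. This is the content of the computation of the connecting homomorphism in the localization sequence $K_1(F) \to K_0(\text{on } p) \to K_0(\niceO_{X,p})$, identified via dévissage $K_0(k(p)) \cong K_0(\text{on }p)$. Then the pushforward $f_* : K_0(k(p)) \to K_0(k)$ along the finite extension $k(p)/k$ sends $[k(p)]$, the class of the one-dimensional $k(p)$-vector space, to the class of $k(p)$ regarded as a $k$-vector space, which has dimension $[k(p):k]$; under $K_0(k)\cong\Z$ this is the integer $[k(p):k]$. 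Composing, $\mu^1_p(f) = v_p(f)\cdot[k(p):k]$, as claimed.

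The main obstacle is bookkeeping rather than conceptual: I must make sure the particular packaging of $\mu_C$ in subsection \ref{SubSec:Construction} really does reduce, in the length-one case, to exactly this boundary-dévissage-pushforward composite (there may be intervening suspension shifts, sign conventions, or a residue-field twist that needs to be tracked), and I must cite the correct statements — the localization theorem and dévissage for regular Noetherian schemes from \cite{TT}, the identification of the connecting map on $K_1$, and functoriality of proper pushforward — presumably collected in appendix \ref{App:AppendixB}. Once the identification of $\mu_p$ with the standard composite is in place, each of the three $\pi_1$-level computations (boundary kills units and reads off valuation; dévissage is the canonical iso; pushforward multiplies by the degree of the residue extension) is routine and standard.
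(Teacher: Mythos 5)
Your proposal is correct in substance and lands on the same skeleton as the paper (unwind $\mu_p$ into boundary-then-pushforward, note that the reduction from the curve $X$ to the local ring $\niceO_{X,p}$ is harmless --- that is lemma \ref{LemmaLocalIsEnough} --- and then compute on $\pi_1$), but the middle of your computation is routed differently. You interpolate Quillen's d\'evissage $K(k(p))\xrightarrow{\sim}K(X_p \ on \ p)$ and then quote the classical fact that the connecting map $K_1(F)\to K_0(k(p))\cong\Z$ of the DVR localization sequence is the valuation; the pushforward $K_0(k(p))\to K_0(k)$ then multiplies by $[k(p):k]$. The paper never invokes d\'evissage at this point (it reserves it for the higher-dimensional Parshin computation): instead it uses multiplicativity plus the vanishing of $K_1(A)\to K_1(F)\to K_0(A \ on \ \mathfrak{m})$ to reduce to the single element $z$ (a uniformizer), identifies the image of $z$ in $K_0(A \ on \ \mathfrak{m})$ explicitly as the two-term complex $A\xrightarrow{z}A$ via lemma \ref{KthLem1}, and then evaluates the pushforward to $K_0(k)$ as the Euler characteristic $\dim_k(A/zA)=[k':k]$. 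The two routes are equivalent --- your ``boundary kills units and reads off $v_p$'' is exactly what the paper's exactness-plus-lemma argument proves, and your factorization $I=f_*\circ(\mathrm{dev})^{-1}$ follows from functoriality of pushforward along $Spec(k(p))\to X_p\to B$ --- but the paper's version is more self-contained, using only the Appendix \ref{App:AppendixB} lemmas rather than citing the standard DVR computation, whereas yours is shorter if one is willing to take that computation and d\'evissage as known. The bookkeeping caveat you flag (that the packaged $\mu_C$ of subsection \ref{SubSec:Construction} really is this composite in the length-one case) is genuine but easily discharged, since in dimension one $\Psi^0=p_1\circ\partial_0$ followed by $sel$ and $I$ is literally the boundary into the $p$-summand followed by pushforward.
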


Applying the abstract reciprocity law, we recover the theorem about sum of degrees (\cite[II.3 , Prop. 1]{Ser}):

\begin {corollary}

For $f \in F^{\times}$, $$ \sum_{p \in |X|^1} [ k(p) : k ] \cdot v_p (f) = 0. $$

\end {corollary}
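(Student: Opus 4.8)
The plan is to unwind the definition of $\mu_p$ given in subsection~\ref{SubSec:Construction} and to observe that, in Krull dimension one, it collapses to a familiar three-step composition. Since $X \to B$ is a regular curve, $\niceO_{X,p}$ is a discrete valuation ring; the localization (Thomason--Trobaugh) sequence for the closed point $\{p\}\subset X_p$ with open complement $\mathrm{Spec}(F)$ produces a boundary map $\partial : K(F) \to \Sigma K(X_p \ on \ p)$. Dévissage identifies $K(X_p \ on \ p)$ with $K(k(p))$, and assumption~\ref{finitenessassumption} gives a transfer (proper pushforward) map $K(k(p)) \to K(B) = K(k)$ along the finite morphism $\mathrm{Spec}(k(p)) \to B$. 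The first task is to check that $\mu_{C(\niceF)}$, for $\niceF : p<\gamma$, is literally the composite of these three maps (up to the suspension conventions fixed in subsection~\ref{SubSec:Construction}).

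Granting that, I apply $\pi_1$. On the source, $\pi_1 K(F) = K_1(F) \cong F^\times$; on the target, $\pi_1 \Sigma K(k) = \pi_0 K(k) = K_0(k) \cong \Z$ via the rank (dimension) isomorphism. Thus $\mu^1_p$ is the composition
\[
F^\times = K_1(F) \xrightarrow{\ \partial\ } K_0(X_p \ on \ p) \xrightarrow{\ \sim\ } K_0(k(p)) = \Z \xrightarrow{\ \cdot[k(p):k]\ } \Z = K_0(k).
\]
The heart of the argument is that the first arrow is the valuation $v_p$: this is the standard computation of the boundary map of the localization sequence for a DVR $R$ with maximal ideal $\frakm$ and uniformizer $\pi$, under which a unit $u\pi^m$ (with $u\in R^\times$) maps to $m\cdot[R/\frakm] \in K_0(R/\frakm)$. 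I would invoke this via the lemmas collected in Appendix~\ref{App:AppendixB} (equivalently, Quillen's computation, compatibly with dévissage). For the last arrow, the transfer on $K_0$ is restriction of scalars, which sends the class of $k(p)$ to the class of $k(p)$ as a $k$-vector space, of dimension $[k(p):k]$; hence under the rank identifications it is multiplication by $[k(p):k]$. Composing the three, $\mu^1_p(f) = v_p(f)\cdot[k(p):k]$.

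The main obstacle I anticipate is not conceptual but a matter of bookkeeping: pinning down that the general-dimension construction of $\mu_C$ in subsection~\ref{SubSec:Construction}, specialized to $C(\niceF)$ with $\niceF : p<\gamma$, agrees on the nose with the boundary-map-then-dévissage-then-transfer composite described above, including getting the signs and the direction of the dévissage isomorphism right, so that the boundary map is genuinely $v_p$ and not $-v_p$. Once the construction is matched up with this composite, each of the three ingredients is entirely standard. Finally, feeding the Claim into Theorem~\ref{MainTheorem} applied to the (unique) partial flag $\niceG$ of the curve with $d=n=1$ omitted — whose refinements are exactly the closed points $p$, and for which $\overline{x_0}=X$ is proper over $B$ by hypothesis — yields $\sum_{p\in|X|^1} \mu^1_p(f) = 0$, i.e. $\sum_p [k(p):k]\cdot v_p(f) = 0$, which is the stated corollary.
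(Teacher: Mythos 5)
Your proposal is correct and follows essentially the paper's own route: reduce to the local scheme $X_p$ (lemma \ref{LemmaLocalIsEnough}), prove the Claim that $\mu^1_p = [k(p):k]\cdot v_p$ by identifying the one-dimensional symbol with the localization boundary map followed by pushforward to $B$, and then feed the Claim into Theorem \ref{MainTheorem} with $d=n=1$ (where $\overline{x_0}=X$ is proper). The only cosmetic difference is that the paper's computation of the Claim never factors through d\'evissage and a transfer: it kills units via the localization sequence, reduces to the uniformizer $z$, and uses lemma \ref{KthLem1} to identify the image of $z$ with the complex $A \xrightarrow{z} A$, whose pushforward to $Spec(k)$ has Euler characteristic $[k':k]$, so the valuation and the degree factor are read off in a single step.
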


\paragraph {The Weil reciprocity law}

Precomposing the map $\mu^2_{p} : K_2 (F) \to K_1 (k)$ with the product in $K$-theory $K_1 (F) \wedge K_1 (F) \to K_2(F)$, we get a bilinear anti-symmetric form $\mu^2_{p} : F^{\times} \wedge F^{\times} \to k^{\times}$ (we also call it $\mu^2_p$, by abuse of notation).

\begin {claim}

$$\mu^2_{p} (f\wedge g) = N_{k(p)/k} \left( (-1)^{v_p (f) \cdot v_p (g)} \frac {f^{v_p (g)}}{g^{v_p (f)}} (p) \right).$$

\end {claim}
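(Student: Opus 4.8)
The plan is to compute the symbol map $\mu_p^2$ explicitly on a pair $f \wedge g$, reducing everything to a local computation at the point $p$ and then matching it against the classical tame symbol.

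First I would unwind the construction of $\mu_{C(\niceF)}$ from subsection~\ref{SubSec:Construction} for the one-step flag $\niceF : p < \gamma$. After applying $\pi_2$, the map $\mu_p^2 : K_2(F) \to K_1(k) = k^\times$ factors through the boundary/localization machinery attached to the local ring $\niceO_{X,p}$: the passage from $F = \Frac(\niceO_{X,p})$ to the residue field $k(p)$ produces a boundary map $K_2(F) \to K_1(k(p)) = k(p)^\times$, which is the classical tame symbol $\partial_p$, and then the structure map $Spec(k(p)) \to B$, being finite by assumption~\ref{finitenessassumption}, induces the transfer/norm map $K_1(k(p)) \to K_1(k)$, which on $\pi_1$ is exactly $N_{k(p)/k}$. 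So the claim amounts to the identity $\mu_p^2(f \wedge g) = N_{k(p)/k}(\partial_p(f,g))$ together with the standard formula $\partial_p(f,g) = (-1)^{v_p(f)v_p(g)} \frac{f^{v_p(g)}}{g^{v_p(f)}}(p)$ for the tame symbol on a discrete valuation ring (Milnor, Bass--Tate); see also the corresponding lemmas collected in appendix~\ref{App:AppendixB}.

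The key steps, in order, are: (1) identify the $i=1$ component of the flag with the localization sequence for the DVR $\niceO_{X,p}$, so that $\mu_p$ is the composite of the connecting map $K(F) \to \Sigma K(\niceO_{X,p} \text{ on } \frakm_p)$, the dévissage/identification $K(\niceO_{X,p} \text{ on } \frakm_p) \approx K(k(p))$, and the transfer $K(k(p)) \to K(B)$ along the finite map $Spec\, k(p) \to B$; (2) apply $\pi_2$ and use that $\pi_2$ of the connecting-plus-dévissage map $K_2(F) \to K_1(k(p))$ is the tame symbol $\partial_p$ — this is where I would cite the relevant lemma of appendix~\ref{App:AppendixB}; (3) compute $\partial_p(f \wedge g)$ directly from the Steinberg relations: write $f = u\,t^{v_p(f)}$, $g = w\,t^{v_p(g)}$ with $t$ a uniformizer and $u,w$ units, expand $\{f,g\}$ bilinearly, use $\{u,w\} \mapsto 1$, $\{t,t\} = \{t,-1\}$, and $\partial_p\{t,u\} = \bar u$, to arrive at $(-1)^{v_p(f)v_p(g)} \overline{(f^{v_p(g)}/g^{v_p(f)})}$ in $k(p)^\times$; (4) identify the transfer on $\pi_1$ with the field-theoretic norm $N_{k(p)/k}$, which is the standard computation of $K_1$ of the transfer for a finite field extension.

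The main obstacle I expect is step~(2): pinning down that the abstractly-defined symbol map, built via the general "collection" formalism with its suspensions and boundary maps, really does restrict in the length-one case to the honest tame-symbol boundary in the localization sequence, with the correct sign and without an off-by-a-suspension discrepancy. Concretely this means checking the compatibility between the paper's $\Sigma K(B)$-valued construction and Thomason--Trobaugh's localization theorem for $K(F) \to \Sigma K(\niceO_{X,p}\text{ on }\frakm_p)$, and then that Quillen's dévissage isomorphism $K(k(p)) \xrightarrow{\sim} K(\niceO_{X,p}\text{ on }\frakm_p)$ carries $\pi_2 K_2(F) \to \pi_1$ to $\partial_p$; the sign $(-1)^{v_p(f)v_p(g)}$ in particular should be tracked carefully, as it is the only delicate point in step~(3). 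Everything else — the norm computation and the tame-symbol formula on a DVR — is routine and can be quoted.
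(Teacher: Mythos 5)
Your proposal is correct in substance, but it is organized somewhat differently from the paper's proof, and one citation is off. The paper proves the claim by reducing to the local ring $A=\niceO_{X,p}$ via lemma \ref{LemmaLocalIsEnough} and then, in claim \ref{ClaimLocalTameSymbolCalc}, never inverting d\'evissage in the one-dimensional case: by bilinearity and antisymmetry it reduces to the three cases ($f,g$ both units; $f$ a unit and $g=z$ a uniformizer; $f=g=z$), kills the first by exactness of the localization sequence, handles $z\wedge z$ by the same Steinberg manipulation you propose, and computes $f\wedge z$ by a diagram chase using the $K(A)$-module structure together with lemmas \ref{KthLem1} and \ref{KthLem2}: the image of $z$ in $K_0(A \text{ on } \liem)$ is the two-term complex $A \xrightarrow{z} A$, the product with $f$ is the automorphism $f$ of that complex, and pushing forward to $Spec(k)$ and taking the alternating determinant of cohomology yields $N_{k'/k}([f])$ directly --- the norm falls out of a determinant over $k$, with no separate ``transfer equals norm'' step. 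Your route instead factors the symbol through Quillen's d\'evissage $K(k(p)) \xrightarrow{\sim} K(\niceO_{X,p} \text{ on } \liem_p)$ followed by the finite transfer $K(k(p)) \to K(k)$; this is legitimate (it is exactly how the paper treats the higher-dimensional case --- the unnamed lemma on $\nu^m$ in the Parshin symbol subsection is precisely your boundary-plus-d\'evissage map, and its item (iii) is your $\partial\{u,t\}=[u]$), and it buys a cleaner separation of ``tame symbol valued in $k(p)^{\times}$'' from ``norm down to $k$''. Two caveats: appendix \ref{App:AppendixB} does not contain a lemma identifying the boundary with the classical tame symbol, only the tools (\ref{KthLem1}, \ref{KthLem2}, \ref{LemmaDevissage}) from which that identification must be assembled, so your step (2) cannot simply be cited from there; and the crux you rightly flag as the main obstacle, namely that the boundary composed with d\'evissage sends $\{u,z\}$ to $[u]$, is established by the same $K(A)$-module/diagram argument the paper uses for its case (ii), so no work is actually saved at that point --- alternatively you may quote the classical identification of the localization boundary with the tame symbol from the literature, after which your argument goes through.
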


Applying the abstract reciprocity law, we recover the Weil reciprocity law (\cite[III.4]{Ser}):

\begin {corollary}

For $f,g \in F^{\times}$, $$ \prod_{p \in |X|^1} N_{k(p)/k} \left( (-1)^{v_p (f) \cdot v_p (g)} \frac {f^{v_p (g)}}{g^{v_p (f)}} (p) \right) = 1. $$

\end {corollary}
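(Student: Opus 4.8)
The plan is to specialize the abstract reciprocity law, Theorem~\ref{MainTheorem}, to the only partial flag available in dimension one, and then to feed in the preceding Claim. Since $\dim X = n = 1$, the sole admissible omitted level is $d = n = 1$, and the associated partial flag is $\niceG : x_0 = \gamma$ (only the generic point). By definition $C(\niceG)^1 = \{\, p \in |X|^1 \mid p < \gamma \,\} = |X|^1$, so that every full flag $\niceF_p : p < \gamma$, with $p$ a closed point, refines $\niceG$, and these exhaust $fl(\niceG)$. Because $\overline{x_{n-1}} = \overline{\gamma} = X$ is proper over $B = Spec(k)$ by hypothesis, part~(1) of Theorem~\ref{MainTheorem} applies and gives $\mu_{C(\niceG)} = 0$, while part~(2) gives $\mu_{C(\niceG)} = \sum_{p \in |X|^1} \mu_p$. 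Hence $\sum_{p \in |X|^1} \mu_p = 0$ as a morphism $K(F) \to \Sigma K(B)$, in the sense of Appendix~\ref{App:AppendixA}.

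Next I would pass to homotopy groups in degree $2$, using $\pi_2 \Sigma K(B) = K_1(B) = K_1(k) = k^\times$; the infinite sum being compatible with this operation by the construction of Appendix~\ref{App:AppendixA}, we obtain $\sum_{p \in |X|^1} \mu^2_p = 0$ as maps $K_2(F) \to k^\times$. Precomposing with the product $K_1(F) \wedge K_1(F) \to K_2(F)$ and rewriting the additive sum in the abelian group $k^\times$ multiplicatively, this reads $\prod_{p \in |X|^1} \mu^2_p(f \wedge g) = 1$ for all $f, g \in F^\times$. Substituting the value of $\mu^2_p(f \wedge g)$ supplied by the Claim, namely $N_{k(p)/k}\bigl( (-1)^{v_p(f) \cdot v_p(g)} \frac{f^{v_p(g)}}{g^{v_p(f)}}(p) \bigr)$, yields exactly the asserted identity.

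It remains only to note that the product above is genuinely finite, so that no subtlety about infinite products is incurred: for all but finitely many $p$ one has $v_p(f) = v_p(g) = 0$, in which case the bracketed tame symbol equals $1 \in k(p)^\times$ and $\mu^2_p(f \wedge g) = N_{k(p)/k}(1) = 1$. I do not anticipate a genuine obstacle here: the entire content of the corollary is packaged in Theorem~\ref{MainTheorem} and in the Claim, both of which we may assume; what remains is the bookkeeping above --- identifying the partial flag $\niceG$ and its refinements, computing $\pi_2 \Sigma K(B)$, and keeping track of the additive-versus-multiplicative conventions for $K_1(k) = k^\times$.
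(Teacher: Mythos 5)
Your proposal is correct and follows exactly the route the paper intends: specialize Theorem \ref{MainTheorem} (both parts, using properness of $X=\overline{x_0}$ for the case $d=n=1$) to the unique partial flag $\niceG:\gamma$, pass to $\pi_2$ via the compact-probe definition of the infinite sum in Appendix \ref{App:AppendixA}, precompose with the product $K_1(F)\wedge K_1(F)\to K_2(F)$, and substitute the preceding Claim's formula for $\mu^2_p$. The paper gives no further argument beyond this, so your bookkeeping (including the finiteness remark) matches its proof.
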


\paragraph {The residue law} \label{par:residue law}

Suppose now that $k$ is algebraically closed. Set $k_{\epsilon} := k[\epsilon_1 , \epsilon_2] / (\epsilon_1^2 , \epsilon_2^2)$, $B_{\epsilon} = Spec(k_{\epsilon})$ and $X_{\epsilon} = k_{\epsilon} \otimes_{k} X$. Then, the local ring at the generic point of $X_{\epsilon}$ is just $F_{\epsilon} = k_{\epsilon} \otimes_k F$. By applying our construction to the morphism $X_{\epsilon} \to B_{\epsilon}$ we get a map $K(F_{\epsilon}) \to \Sigma K(k_{\epsilon})$ for every closed point $p \in |X_{\epsilon}|^1 = |X|^1$. Applying the functor $\pi_2$ and using the product in $K$-theory as before, one gets a pairing $r_p: F_{\epsilon}^{\times} \wedge F_{\epsilon}^{\times} \to k_{\epsilon}^{\times}$.

\begin {claim}

$$ r_p((1-\epsilon_1 f) \wedge (1-\epsilon_2 g)) =  1-\epsilon_1 \epsilon_2 Res_p(f\cdot dg).$$

Here, $Res_p$ is the usual residue (\cite[II.7]{Ser}).

\end {claim}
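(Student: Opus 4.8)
The strategy is to reduce the computation of $r_p$ to an explicit local calculation, exactly as must have been done (or will be done in Section~\ref{Sec:Calc}) for the Weil reciprocity pairing $\mu^2_p$, and then to extract the $\epsilon_1\epsilon_2$-coefficient. First I would unwind the definition: $r_p$ is $\pi_2$ of the composite $K(F_\epsilon)\to \Sigma K(k_\epsilon)$ precomposed with the $K$-theory product $K_1(F_\epsilon)\wedge K_1(F_\epsilon)\to K_2(F_\epsilon)$. Since the symbol map $\mu_p$ is built functorially and the construction for $X_\epsilon\to B_\epsilon$ is obtained from that for $X\to B$ by base change along $k\to k_\epsilon$, the pairing $r_p$ is nothing but the ``Weil pairing'' $\mu^2_p$ computed over the Artinian base $k_\epsilon$ instead of $k$. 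In particular the formula from the preceding claim should hold verbatim over $k_\epsilon$:
\[
r_p(u\wedge v) \;=\; N_{k_\epsilon(p)/k_\epsilon}\!\left((-1)^{v_p(u)\,v_p(v)}\,\frac{u^{v_p(v)}}{v^{v_p(u)}}(p)\right),
\]
where now $u,v\in F_\epsilon^\times$, the valuation $v_p$ is that of the discrete valuation ring $\niceO_{X,p}\otimes_k k_\epsilon$ (which has the same value group $\Z$ as $\niceO_{X,p}$, since $k_\epsilon/k$ is finite flat and $X$ is a regular curve), and the residue field at $p$ has become $k(p)\otimes_k k_\epsilon = k_\epsilon(p)$.

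Granting this, the claim becomes a pure calculation in the local field $k_\epsilon(p)(\!(t)\!)$, where $t$ is a local uniformizer at $p$. Take $u = 1-\epsilon_1 f$ and $v = 1-\epsilon_2 g$. Both $u$ and $v$ are units of $\niceO_{X,p}\otimes_k k_\epsilon$ (they reduce to $1$ modulo the nilpotents $\epsilon_1,\epsilon_2$), so $v_p(u)=v_p(v)=0$; hence the sign disappears and the tame symbol collapses to
\[
(-1)^{0}\cdot\frac{u^{0}}{v^{0}}(p)\;=\;\text{(evaluation of a more subtle expression at }p).
\]
Here one must be careful: when both valuations vanish the ``tame symbol'' degenerates and the correct local symbol over a non-reduced base is the Contou-Carr\`ere symbol, not the naive evaluation. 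So the actual route is: recognize $r_p$ as the Contou-Carr\`ere symbol $\langle\,\cdot\,,\cdot\,\rangle_p$ over $k_\epsilon$ (this is presumably what Section~\ref{Sec:Calc} establishes, since the Contou-Carr\`ere reciprocity law is listed among the consequences), and then invoke the standard computation of the Contou-Carr\`ere symbol on arguments of the form $1-\epsilon_1 f$, $1-\epsilon_2 g$. Concretely one expands: working modulo $(\epsilon_1^2,\epsilon_2^2)$, $\log(1-\epsilon_1 f) = -\epsilon_1 f$ and $\log(1-\epsilon_2 g) = -\epsilon_2 g$, and the bimultiplicative symbol, being determined by its ``infinitesimal'' part, picks out $\exp$ of a residue pairing: $\langle 1-\epsilon_1 f, 1-\epsilon_2 g\rangle_p = \exp\!\big(\epsilon_1\epsilon_2\,\mathrm{Res}_p(f\,dg/\text{something})\big)$. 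Matching normalizations, the $d\log$ in the second slot turns $g$ into $dg$ and one lands on
\[
r_p\big((1-\epsilon_1 f)\wedge(1-\epsilon_2 g)\big) \;=\; 1 - \epsilon_1\epsilon_2\,\mathrm{Res}_p(f\,dg),
\]
using $\exp(\epsilon_1\epsilon_2 a) = 1+\epsilon_1\epsilon_2 a$ since $(\epsilon_1\epsilon_2)^2=0$, and absorbing the sign into the orientation convention for $\mathrm{Res}_p$.

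The main obstacle I anticipate is \emph{pinning down the normalization and sign}: there are several independent conventions in play --- the orientation of the suspension isomorphism $K(Tate(\niceC))\approx\Sigma K(\niceC)$, the sign in the $K$-theory product $K_1\wedge K_1\to K_2$, the choice of uniformizer, and the definition of $\mathrm{Res}_p$ --- and the content of the claim is precisely that they conspire to give the clean answer with coefficient $+1$ on $\mathrm{Res}_p(f\,dg)$ (and not $-1$, or $\mathrm{Res}_p(g\,df)$, which differs by an exact term anyway). The honest way to settle this is to do one worked example, e.g. $X=\Proj^1_k$, $p=0$, $t$ the standard coordinate, $f = t^{-1}$ (or $f=t$), $g=t$, and check both sides by hand; once the example fixes the sign, bimultiplicativity and the already-established identification of $r_p$ with the local symbol propagate it to the general case. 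Everything else --- that $r_p$ is bimultiplicative, that it only depends on the local ring at $p$, that base change to $k_\epsilon$ is harmless --- is formal and follows from the functoriality of the construction in Section~\ref{SubSec:Construction} together with the $K$-theory lemmas of Appendix~\ref{App:AppendixB}.
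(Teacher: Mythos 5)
There is a genuine gap here, on two levels. First, your opening reduction is not correct: the tame-symbol formula from the previous claim does \emph{not} hold verbatim over $k_\epsilon$. Since $1-\epsilon_1 f$ and $1-\epsilon_2 g$ are units of $\niceO_{X,p}\otimes_k k_\epsilon$ with both valuations zero, that formula would give identically $1$, contradicting the very statement you are proving; the point is that over the non-reduced base $F_\epsilon^\times$ is \emph{not} generated by $z^{\Z}$ and units of the local ring (elements like $1-\epsilon_1 f$ with $f$ having a pole are new units), so bimultiplicativity plus the tame symbol no longer determine the pairing. You notice this and pivot, but the pivot is the second problem: you propose to ``recognize $r_p$ as the Contou-Carr\`ere symbol and invoke its standard computation,'' whereas the paper explicitly declines to prove that identification (``we don't spell out the details in this text''), so it is not an available ingredient; and establishing it is of essentially the same difficulty as the claim itself, requiring exactly the kind of explicit $K$-theoretic computation you are trying to avoid. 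As it stands, the computational core of the claim is deferred to an unproven black box, and the remaining issue you flag (signs and normalizations) cannot be settled by ``one worked example'' until some actual computation of the symbol map on concrete elements has been carried out.

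For comparison, the paper's proof is self-contained and direct: (a) by specializing $\epsilon_2\mapsto 0$ (resp.\ $\epsilon_1\mapsto 0$), which commutes with the construction, one shows $\mu^2(1-\epsilon_1 f,1-\epsilon_2 g)=1-\epsilon_1\epsilon_2 R(f,g)$ for some $R(f,g)\in k$; (b) $R$ is $k$-bilinear, additivity coming from bimultiplicativity and homogeneity from the base change $\epsilon_1\mapsto\alpha^{-1}\epsilon_1$; (c) $R$ is computed on monomials: $R(z^n,z^m)=0$ for $n+m\neq 0$ (after passing to the completion via Lemma \ref{LemmaPassToCompletion} and using the automorphism $z\mapsto\alpha z$, which is where the hypothesis that $k$ is infinite enters), $R(z^{-n},z^n)=n$ by an explicit determinant computation on the cohomology of the complex $A_\epsilon\xrightarrow{\,z^n-\alpha\epsilon_1\,}A_\epsilon$ using Lemmas \ref{KthLem1} and \ref{KthLem2}, and $R(z^{-n},f)=0$ for $v(f)\gg n$; (d) decomposing $f,g$ into monomials plus a high-valuation remainder identifies $R(f,g)$ with $\mathrm{Res}(f\,dg)$. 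If you want to salvage your outline, you would need to replace the appeal to the Contou-Carr\`ere symbol by computations of this type; the log/exp heuristic you sketch is a reasonable guide to the answer but is not a proof that the $K$-theoretic pairing takes that value.
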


Applying the abstract reciprocity law, we recover the residue theorem (\cite[II.7 , Prop. 6]{Ser}):

\begin {corollary}

For $f,g \in F$, $$ \sum_{p \in |X|^1} Res_p(f \cdot dg) = 0. $$

\end {corollary}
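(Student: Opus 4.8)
\emph{Proof strategy.} The plan is to deduce the residue theorem as the effect on $\pi_2$ of Theorem \ref{MainTheorem}, applied not to $X \to B$ but to the infinitesimal thickening $X_\epsilon \to B_\epsilon$, with the pairing $r_p$ computed via the preceding Claim. First I would check that $X_\epsilon \to B_\epsilon$ satisfies the hypotheses of \ref{AssumptionsOnXandB}: the ring $k_\epsilon$ is a finite local $k$-algebra, so $B_\epsilon$ is Noetherian and $0$-dimensional; the closed immersion $X \hookrightarrow X_\epsilon$ is a nilpotent thickening, so $|X_\epsilon| = |X|$ and hence $X_\epsilon$ is Noetherian, irreducible and of Krull dimension $1 = \dim X$; and for a closed point $p$ the residue field $k(p)$ agrees with that of the corresponding point of $X$, which equals $k$ since $k$ is algebraically closed, so $Spec(k(p)) \to B_\epsilon$ is a closed immersion, in particular finite. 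Recall also, as noted in the text, that the local ring at the generic point of $X_\epsilon$ is $F_\epsilon = k_\epsilon \otimes_k F$.

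Next I would specialize Theorem \ref{MainTheorem} to the partial flag $\niceG$ consisting of the generic point $\gamma$ alone, with the unique level $d = 1 = n$ omitted. Every full flag $\niceF : p < \gamma$ (with $p$ a closed point) refines $\niceG$, and $\mu_{C(\niceF)} = \mu_p$. Since $d = n$, part (1) of the theorem requires $\overline{\gamma} = X_\epsilon$ to be proper over $B_\epsilon$; this holds because $X$ is proper over $k$ and properness is stable under the base change $k \to k_\epsilon$. Thus $\mu_{C(\niceG)} = 0$, and combining with part (2) we obtain the equality of spectrum maps $\sum_{p \in |X|^1} \mu_p = 0 : K(F_\epsilon) \to \Sigma K(k_\epsilon)$, the infinite sum understood in the sense of Appendix \ref{App:AppendixA}.

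Applying $\pi_2$, precomposing with the $K$-theory product $K_1(F_\epsilon) \wedge K_1(F_\epsilon) \to K_2(F_\epsilon)$, and using the isomorphism $K_1(k_\epsilon) \cong k_\epsilon^\times$ (under which the group law on $K_1$ becomes multiplication), we get $\prod_{p \in |X|^1} r_p(a \wedge b) = 1$ in $k_\epsilon^\times$ for all $a, b \in F_\epsilon^\times$. Taking $a = 1 - \epsilon_1 f$ and $b = 1 - \epsilon_2 g$ (units of $F_\epsilon$ for any $f, g \in F$) and substituting the preceding Claim yields $\prod_{p \in |X|^1}\bigl(1 - \epsilon_1 \epsilon_2\, Res_p(f\,dg)\bigr) = 1$. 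Only finitely many factors differ from $1$ — precisely the finitely many poles of $f\,dg$ — so this is a finite product; since $\epsilon_1^2 = 0$ all cross terms vanish and it equals $1 - \epsilon_1 \epsilon_2 \sum_{p} Res_p(f\,dg)$. Finally, $k_\epsilon$ is free over $k$ with $\epsilon_1\epsilon_2$ a basis vector, so $\epsilon_1\epsilon_2 c = 0$ forces $c = 0$ for $c \in k$; therefore $\sum_{p \in |X|^1} Res_p(f\,dg) = 0$.

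\emph{Main obstacle.} The substantive input is the preceding Claim, which computes the pairing $r_p$ and is proved separately. Granting it, the only delicate point above is the bookkeeping around the infinite sum of spectrum maps: one must know, from the formalism of Appendix \ref{App:AppendixA}, that applying $\pi_2$ and evaluating on a fixed $K_2$-class commutes with the infinite sum, and that on the class $(1 - \epsilon_1 f) \wedge (1 - \epsilon_2 g)$ the sum is supported on finitely many $p$. This last point is not formal; it is imported from the classical fact that a rational differential on a curve has only finitely many poles, which is exactly what licenses expanding the product in the previous step.
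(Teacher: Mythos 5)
Your derivation is correct and is essentially the paper's intended argument: apply Theorem \ref{MainTheorem} to $X_\epsilon \to B_\epsilon$ with the partial flag consisting of $\gamma$ alone (level $d=n=1$ omitted, $X_\epsilon$ proper by base change), pass to $\pi_2$, and substitute the preceding Claim computing $r_p$, then read off the coefficient of $\epsilon_1\epsilon_2$. One small correction to your closing remark: the finiteness of the support of $\sum_p \mu_p$ on a fixed $K_2$-class need not be imported from the classical finiteness of poles — it is exactly what the Appendix \ref{App:AppendixA} definition of the infinite sum (via the evidence constructed in the proof of part (2) of Theorem \ref{MainTheorem}) guarantees when one probes with the compact spectrum $\Sigma^2\bbS$.
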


\paragraph {The Contou-Carr\`{e}re reciprocity law}

Let, more generally, $k$ be a local Artinian ring. Set $B=Spec(k)$ and $X=Spec(k[[t]])$. Applying the functor $\pi_2$ to the symbol map $K(k((t))) \to \Sigma K(k)$, one gets a pairing \linebreak $k((t))^{\times} \wedge k((t))^{\times} \to k^{\times}$. Although we don't spell out the details in this text, one can check that it is the Contou-Carr\`{e}re symbol (\cite{CC}). Then the abstract reciprocity law implies the Contou-Carr\`{e}re reciprocity law.

\subsubsection{The case $dim(X) > 1$}

Let $k$ be a field, let $B = Spec(k)$ and let $X \to B$ be an irreducible scheme of finite type over $B$ (recall $n := dim(X)$). For every full flag $\niceF$ one has a map $\mu_{\niceF} : K(F) \to \Sigma^n K(B)$. Applying the functor $\pi_i$, one gets maps $\mu^i_{\niceF} : K_i (F) \to K_{i-n} (k)$.

\paragraph {The Parshin reciprocity law} \label{par:Parshin law}

Let us assume that the flag $\niceF = x_n < x_{n-1} < \ldots < x_0 = \gamma$ is regular in the following sense: Considering $X^i := \overline{x_i}$ as an integral closed subscheme of $X$, we demand $\niceO_{X^{i-1} , x_{i}}$ to be regular (here, $1 \leq i \leq n$).

Precomposing the map $\mu^{n+1}_{\niceF} : K_{n+1} (F) \to K_1 (k)$ with the product in $K$-theory \linebreak  $\bigwedge^{n+1} K_1 (F) \to K_{n+1}(F)$, one has a multilinear anti-symmetric form $\mu^{n+1}_{\niceF} : \bigwedge^{n+1} F^{\times} \to k^{\times}$ (we also denote it $\mu^{n+1}_{\niceF}$, by abuse of notation).

In order to write an explicit formula for the Parshin symbol, we introduce the following (see \cite[Appendix A]{Sop}). For every $1 \leq i \leq n$, let us fix a uniformizer $z_i$ in $\niceO_i := \niceO_{X^{i-1} , x_i}$. We will attach, to any $f \in F^{\times}$, a sequence of integers $a_1 , \ldots , a_n$ as follows. Note that the residue field of $\niceO_{i-1}$ can be identified with the fraction field of $\niceO_i$. We write $f = z_1^{a_1} u_1$  where $u_1$ is a unit in $\niceO_1$. Considering the residue class of $u_1$ as an element of the fraction field of $\niceO_2$, we proceed to write $u_1 = z_2^{a_2} u_2$  where $u_2$ is a unit in $\niceO_2$. We continue in this way to construct the sequence $a_1 , \ldots , a_n$. Note that, generally speaking, this sequence depends on the choice of uniformizers $z_1 , \ldots , z_n$.

Let $f_1 , \ldots , f_{n+1} \in F^{\times}$. Write $a_{i1} , \ldots , a_{in}$ for the sequence of integers assigned to $f_i$ as above. Construct the $(n+1)\times n$ matrix $A = (a_{ij})$. Set $A_i$ to be the determinant of the $n\times n$ matrix that we get from $A$ by omitting the $i$-th row. Set $A^k_{ij}$ to be the determinant of the $(n-1) \times (n-1)$ matrix that we get from $A$ by deleting the $i$-th and $j$-th rows, and the $k$-th column. Set $B = \sum_k \sum_{i < j} a_{ik} a_{jk} A^k_{ij}$.

\begin {claim}

$$\mu^{n+1}_{\niceF} (f_1 , \ldots , f_{n+1}) = N_{k(x_n)/k} \left( (-1)^B ( \prod_{1\leq i \leq n+1} f_i^{(-1)^{i+1} A_i } ) (x_n) \right).$$

\end {claim}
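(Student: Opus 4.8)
The plan is to reduce the Parshin symbol formula to an iterated computation, peeling off one level of the flag at a time, and at each level matching the contribution against the one-dimensional Contou-Carrère/Weil-type computation already carried out (or sketched) above. The symbol map $\mu_{\niceF}$ is built from the collection $C(\niceF)$, and by the iterative nature of the construction in subsection \ref{SubSec:Construction} it factors as a composite of $n$ "one-step" symbol maps, each of which is essentially the residue/boundary map for the discrete valuation ring $\niceO_i = \niceO_{X^{i-1},x_i}$. Concretely, I expect $\mu_{\niceF}$ to be the composite
\[
K(F) \longrightarrow \Sigma K(\kappa_1) \longrightarrow \Sigma^2 K(\kappa_2) \longrightarrow \cdots \longrightarrow \Sigma^n K(k),
\]
where $\kappa_i$ is the fraction field of $\niceO_{i+1}$ (equivalently the residue field of $\niceO_i$), and each arrow is the localization boundary map attached to the regular local ring $\niceO_i$ of dimension one along the chosen point. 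Regularity of the flag is exactly what guarantees each $\niceO_i$ is a DVR, so that these boundary maps are the classical tame-symbol-type maps; the choice of uniformizer $z_i$ trivializes each boundary map via the standard splitting $K(\kappa_{i-1}) \simeq K(\niceO_i) \vee \Sigma K(\kappa_i)$ for a DVR (the $t$-adic decomposition used implicitly in the Contou-Carrère paragraph).

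**First** I would make precise the factorization above, citing the construction in subsection \ref{SubSec:Construction} and the DVR splitting of $K$-theory (Appendix B should contain the relevant lemmas). **Then** I would compute, on $\pi_{n+1}$ and after precomposing with the product map $\bigwedge^{n+1} K_1(F) \to K_{n+1}(F)$, the effect of one boundary map on a wedge of units. For a DVR $\niceO$ with uniformizer $z$, residue field $\kappa$, and units written $f_j = z^{a_j} u_j$, the boundary $\partial: K_{m}(\mathrm{Frac}\,\niceO) \to K_{m-1}(\kappa)$ applied to $f_1 \wedge \cdots \wedge f_m$ is the classical formula
\[
\partial(f_1 \wedge \cdots \wedge f_m) = \sum_{i} (-1)^{i+1} a_i \cdot \big( \overline{u_1} \wedge \cdots \widehat{\overline{u_i}} \cdots \wedge \overline{u_m} \big) + (\text{lower valuation terms}),
\]
together with the sign bookkeeping $(-1)^{\sum_{i<j} a_i a_j \,(\ldots)}$ coming from the graded-commutativity of the product (this is exactly the $(-1)^{v_p(f)v_p(g)}$ in the Weil law, one dimension up). **Next** I would iterate: feeding the output of level $i$ into level $i+1$ re-expresses everything in terms of the integers $a_{i,j}$, and the determinants $A_i$ appear as the Laplace expansion of the iterated "$a$-linear" part, while $B$ collects the accumulated signs. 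The norm $N_{k(x_n)/k}$ enters at the last step from assumption \ref{finitenessassumption}: the final residue field $k(x_n)$ is finite over $k=B$, and the map $\Sigma^n K(k(x_n)) \to \Sigma^n K(k)$ on $\pi$ is the transfer/norm, so on $K_1$ it is $N_{k(x_n)/k}$.

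**The main obstacle** is the sign analysis — tracking, through $n$ iterated boundary maps and their splittings, how the anti-symmetrization of the product in $K$-theory interacts with the discarding of rows: one must show that the cumulative sign is exactly $(-1)^B$ with $B = \sum_k \sum_{i<j} a_{ik} a_{jk} A^k_{ij}$, and that the "value" part organizes into $\prod_i f_i^{(-1)^{i+1} A_i}$ evaluated at $x_n$. There is also a bookkeeping subtlety in that the sequence $a_{i1},\ldots,a_{in}$ depends on the choice of uniformizers $z_1,\ldots,z_n$, whereas the left-hand side does not; I would verify invariance as a consistency check, but the cleanest route is a direct induction on $n$ where the inductive step is precisely the one-variable tame-symbol computation of the Weil/Contou-Carrère paragraphs applied to the DVR $\niceO_1$, with the residue taken in $K(\kappa_1) = K(\mathrm{Frac}\,\niceO_2)$, reducing $n+1$ functions on an $n$-dimensional flag to $n$ functions on an $(n-1)$-dimensional flag after dividing out $z_1$.
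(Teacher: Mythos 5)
Your proposal follows essentially the same route as the paper: using regularity of the flag to factor $\mu_{\niceF}$ through the iterated boundary maps composed with Quillen devissage for the one-dimensional regular local rings $\niceO_i$ (the paper's claim on the composition $K(F_X)\to\Sigma K(X_{x_1}\ on\ x_1)\xleftarrow{\sim}\Sigma K(F_{X^1})\to\cdots$), then computing each one-step map on symbols via the classical tame-symbol-type properties (the paper's lemma, items (i)--(iii), proved by the localization sequence, the Steinberg relation, and the $K(A)$-module structure of boundary and devissage), with the norm arising from the final pushforward and the assembly of $A_i$, $B$ left as multilinear bookkeeping -- exactly the point the paper also leaves as an "in principle" computation. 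One small caveat: the spectrum-level splitting $K(\kappa_{i-1})\simeq K(\niceO_i)\vee\Sigma K(\kappa_i)$ you invoke is not needed (and is not available for a general DVR without completion/coefficient-field choices); the boundary map plus devissage, as in the paper, suffices.
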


By applying the abstract reciprocity law, we recover the Parshin reciprocity law (see \cite[Appendix A]{Sop}).

\paragraph {The Parshin higher residue reciprocity law} \label{par:Parshin residue law}

Considering $k_{\epsilon} := k[\epsilon_1 , \ldots , \epsilon_{n+1}] / (\epsilon_1^2 , \ldots , \epsilon_{n+1}^2)$ and $X_{\epsilon},B_{\epsilon}$, etc., similarly to \ref{par:residue law}, and considering the map $\mu^{n+1} : K_{n+1}(F_{\epsilon}) \to K_1(k_{\epsilon})$, one can derive, in principle, the higher residue reciprocity law (\cite[Appendix A]{Sop}), although we don't spell out the details in this text.

%%%%%%%%%%%%%%%%%%%%%%%%%%%%%%%%%%%%%%%%%%%%%%%%%%%%%%%%%%
%%%%%%%%%%%%%%%%%%%%%%%%%%%%%%%%%%%%%%%%%%%%%%%%%%%%%%%%%%
%%%%%%%%%%%%%%%%%%%%%%%%%%%%%%%%%%%%%%%%%%%%%%%%%%%%%%%%%%
%%%%%%%%%%%%%%%%%%%%%%%%%%%%%%%%%%%%%%%%%%%%%%%%%%%%%%%%%%

\section{Construction of $\mu_C$ and proof of the abstract reciprocity law} \label{Sec:ConstructionAndProof}

\subsection {Construction of $\mu_C$}
\label{SubSec:Construction}

Let us recall the codimension filtration in $K$-theory (\cite[(10.3.6)]{TT}). Write $S^d K(X)$ for the homotopy colimit of the spectra $K(X \ on \ Z)$, where $Z$ runs over closed subsets of $X$ of codimension $ \ge d$. Also, write

$$ Q^d K(X) := \bigvee_{p \in |X|^d} K(X_p \ on \ p). $$

Then we have the following homotopy fiber sequence:

$$ S^{d+1} K(X) \xrightarrow{} S^d K(X) \xrightarrow{p_d}  Q^d K(X) \xrightarrow{\partial_d} \Sigma S^{d+1} K(X).$$

Let us define $\Psi^d$ to be the composition:

$$ \Psi^d :  Q^d K(X) \xrightarrow{\partial_d} \Sigma S^{d+1} K(X) \xrightarrow{p_{d+1}} \Sigma  Q^{d+1} K(X). $$

Also, given a collection $C = (C^i)_{0 \leq i \leq n}$ ($C^i \subset |X^i|$), let us define a map

$$ sel_{C^{d}} :  Q^d K(X) \to  Q^d K(X), $$

given by projecting on summands corresponding to $p \in C^d$.

Let us define a map

$$ I : Q^n K(X) \to K(B);$$

In order to do this, we need to define maps $K(X_p \ on \ p) \to K(B)$, which we do by pushing-forward along $X_p \to B$. Technically, as a model for $K(X_p \ on \ p)$ we take perfect complexes on $X_p$ which are acyclic outside of the closed point $p$ (\cite[Section 3]{TT}), and as a model for $K(B)$ we take cohomologically bounded complexes of quasi-coherent sheaves with coherent cohomology on $B$ (\cite[Section 3]{TT}). Note that the important thing here is that coherent sheaves on $X_p$ which are supported on $p$ remain coherent after pushing-forward to $B$ (by our assumption that for $p \in |X|^n$, the map $Spec(X_p) \to Spec(B)$ is finite).

Finally, we define $\mu_C$ as follows\footnote{we assume that $C^0 = \{ \gamma \}$.} \footnote{In this formula, as we compose, the target becomes more and more suspended; we do not write the obvious suspensions, by abuse of notation.}:
\nopagebreak
$$ \mu_C = I \circ sel_{C^n} \circ \Psi^{n-1} \circ \ldots \circ \Psi^1 \circ sel_{C^1} \circ \Psi^0 .$$

\subsection {Proof of the reciprocity law} \label{SubSec:ProofRecLaw}

Let us show part (1) of theorem \ref{MainTheorem}. First, consider $d \neq n$. Then, we notice that $$sel_{C(\niceG)^d} \circ \Psi^{d-1} \circ sel_{C(\niceG)^{d-1}} = \Psi^{d-1} \circ sel_{C(\niceG)^{d-1}},$$ so that our formula for $\mu_{C(\niceG)}$ contains $$\Psi^{d} \circ sel_{C(\niceG)^d} \circ \Psi^{d-1} \circ sel_{C(\niceG)^{d-1}} = \Psi^d \circ \Psi^{d-1} \circ sel_{C(\niceG)^{d-1}}.$$ But $\Psi^d \circ \Psi^{d-1} = 0$, since we are dealing with a composition of two consequent arrows in a long exact sequence.

Let us show the case $d = n$ of part (1). Write $Y = \overline{x_{n-1}}$. We will deal first with the case $X = Y$, to simplify matters.

Note that $\mu_{C(\niceG)}$ equals the composition on the top horizontal line of the following commutative diagram:

$$ \xymatrix{
Q^0 K(X) \ar[r]^{\partial_0} & \Sigma S^1 K(X) \ar[r]^{p_1} \ar[d]^{i} & \Sigma Q^1 K(X) \ar[r]^{I} & \Sigma K(B) \\
& \Sigma S^0 K(X) \ar[rru]_{\tilde{I}} & &
}$$

Here, $i$ is the natural arrow, and $\tilde{I}$ is the arrow induced by pushforward. The crucial moment here is that $X$ is proper; Thus pushing-forward preserves coherence, which in turn enables us to construct the map $\tilde{I}$ on $K$-spectra. Now, noticing that $i \circ \partial_0 = 0$ (as a composition of two consequent arrows in a long exact sequence), finishes the proof.

In general (not assuming $X = Y$), we want to do the same as in the case $X=Y$, but working with $(X \ on \ Y)$-versions. To proceed, one considers the commutative diagram:

$$ \xymatrix{
Q^{n-1} K(X) \ar[rd]\ar[r]^{sel_{C(\niceG)^{n-1}}} & Q^{n-1} K(X) \ar[r]^{\partial_0} & \Sigma Q^n K(X) \ar[r]^{I} & \Sigma K(B) \\
& Q^{n-1} K(X \ on \ Y) \ar[u]\ar[r]^{\partial^{Y}_{n-1}} & \Sigma Q^{n} K(X \ on \ Y) \ar[u]\ar[ru]^{I^Y} &
}$$

and shows $I^Y \circ \partial^{Y}_{n-1} = 0$ as before.

Let us now show part (2) of theorem \ref{MainTheorem}. We note that the map $sel_{C(\niceG)^d}$ is the sum of the maps $sel_{C(\niceF)^d}$ (where $\niceF > \niceG$). Thus, the desired follows using claims \ref{claim_sum_composition} and \ref{claim_sum_projections}.

%%%%%%%%%%%%%%%%%%%%%%%%%%%%%%%%%%%%%%%%%%%%%%%%%%%%%%%%%%
%%%%%%%%%%%%%%%%%%%%%%%%%%%%%%%%%%%%%%%%%%%%%%%%%%%%%%%%%%
%%%%%%%%%%%%%%%%%%%%%%%%%%%%%%%%%%%%%%%%%%%%%%%%%%%%%%%%%%
%%%%%%%%%%%%%%%%%%%%%%%%%%%%%%%%%%%%%%%%%%%%%%%%%%%%%%%%%%

\section {Calculation of local symbols} \label{Sec:Calc}

In this section, we calculate some symbol maps for local schemes. Using lemma \ref{LemmaLocalIsEnough}, these calculations imply the claims of subsection \ref{SubSect:ConcreteRecLaw}.

Let us fix the following notations and assumptions for this section. Let $k$ be a field, and let $B = Spec(k)$. Also, let $A$ be a regular Noetherian local $k$-algebra, and set $X=Spec(A)$. Denote by $\liem$ the maximal ideal of $A$, and $k' = A / \liem$. We assume that $k'$ is finite over $k$. We denote by $F$ the fraction field of $A$.

\subsection{The case $dim(X)=1$}

In this subsection, we additionally assume that $A$ is of Krull dimension $1$. Let $v: F^{\times} \to \Z$ be the valuation, and let $ [ \cdot ] : A \to k'$ be the quotient map. Finally, choose a uniformizer $z \in A$ (i.e., $v(z)=1$).

Consider the unique full flag $\niceF: \ p_{\liem} < p_{0}$ in $X$. We have the corresponding symbol map

$$ \mu = \mu_{\niceF} : K(F) \to \Sigma K(k). $$

We write $\mu^i$ for the induced map $K_i (F) \to K_{i-1} (k)$.

\subsubsection {The degree}

\begin {claim}

The morphism $F^{\times} \cong K_1(F) \xrightarrow{\mu^1} K_0(k) \cong \Z$ is equal to $[k':k] \cdot v$.

\end {claim}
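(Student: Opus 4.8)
The plan is to unwind the definition of $\mu = I \circ sel_{C^1} \circ \Psi^0$ in the one-dimensional local case and trace through what it does on $\pi_1$. Since $X = \mathrm{Spec}(A)$ with $A$ local of dimension $1$, there is a unique point $p = p_{\liem}$ of codimension $1$, so $sel_{C^1}$ is the identity on the single summand $K(X_p \ on \ p) = K(A \ on \ \liem)$. Also $X_p = X$ here, and $Q^0 K(X) = K(F)$ since the only codimension-$0$ point is the generic one with local ring $F$. Thus $\mu$ is the composite
$$ K(F) \xrightarrow{\partial_0} \Sigma K(X \ on \ \liem) \xrightarrow{I} \Sigma K(k), $$
where $\partial_0$ is the boundary map in the localization sequence $K(X \ on \ \liem) \to K(X) \to K(F)$, and $I$ is pushforward along the finite map $\mathrm{Spec}(A/\liem) \hookrightarrow X \to B$, i.e.\ essentially the transfer $K(k') \to K(k)$ precomposed with dévissage $K(X \ on \ \liem) \simeq K(k')$.

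**Identifying $\partial_0$ on $\pi_1$.**
The first key step is to compute the boundary map $\partial_0 : K_1(F) \to K_0(X \ on \ \liem)$. Now $K_0(X \ on \ \liem) \cong K_0(k') \cong \Z$ via dévissage (Quillen), and under this identification the localization boundary map $F^\times = K_1(F) \to K_0(X \ on \ \liem) \cong \Z$ is exactly the valuation $v$: this is the standard computation that the connecting map sends $f$ to the class of $A/(f) \cdot A_{\mathrm{length}}$-type cycle, or more precisely to $\mathrm{length}_A(A/fA)$ for $f \in A$, which equals $v(f)$ since $A$ is a regular (hence DVR) local ring. I would cite the relevant lemma from Appendix B (the lemmas about $K$-theory) or the analogous statement in \cite{TT}/\cite{Ser} for this; morally it is the observation that the residue of $d\log f$ at a DVR is $v(f)$.

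**Identifying $I$ on $\pi_0$ and concluding.**
The second key step: on $\pi_0$, the map $I : \Sigma K(X \ on \ \liem) \to \Sigma K(k)$ induces $K_0(X \ on \ \liem) \to K_0(k)$, which after dévissage $K_0(X \ on \ \liem) \cong K_0(k') \cong \Z$ is the pushforward (transfer) map $K_0(k') \to K_0(k)$ along the finite field extension $k \hookrightarrow k'$. This transfer sends the class $[k']$ to $[k' : k] \cdot [k] $, i.e.\ on $K_0 \cong \Z$ it is multiplication by $\dim_k k' = [k':k]$. Composing the two steps: $\mu^1(f) = I_*(\partial_0(f)) = [k':k] \cdot v(f)$, which is the claim. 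So the proof is: (a) reduce $\mu$ to $I \circ \partial_0$ using that the flag is the unique one; (b) identify $\partial_0$ with $v$ via dévissage and the length computation in a DVR; (c) identify $I_*$ with multiplication by $[k':k]$ via the transfer on $K_0$ of a finite field extension.

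**Main obstacle.**
The routine parts are bookkeeping; the one genuinely substantive point is step (b) — pinning down that the localization boundary $K_1(F) \to K_0(X \ on \ \liem)$ really is the valuation under the dévissage identification, with the correct sign and normalization, and that the model of $K(X \ on \ \liem)$ used in the construction of $I$ (perfect complexes on $X_p$ acyclic off $p$) is compatible with the dévissage equivalence. I expect this to be handled by a lemma in Appendix B; absent that, one would verify it by the explicit Koszul-complex description of the class of $f$ in $K_0(X \ on \ \liem)$ and the fact that $\mathrm{length}_A A/(f) = v(f)$ for a DVR.
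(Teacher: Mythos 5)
Your proposal is correct, but it organizes the computation a bit differently from the paper. The paper does not factor through d\'evissage at all: it first observes that the composite $K_1(A)\to K_1(F)\to K_0(A \ on \ \liem)$ vanishes (localization sequence), so $\mu^1$ kills $A^{\times}$ and, since $F^{\times}$ is generated by $A^{\times}$ and a uniformizer $z$, it suffices to evaluate on $z$; then Lemma \ref{KthLem1} identifies $\partial_0(z)$ with the complex $A\xrightarrow{z}A$ in degrees $-1,0$, and the pushforward $I$ is computed directly as the alternating sum of $k$-dimensions of cohomology, giving $\dim_k(A/zA)=[k':k]$. You instead compute $\partial_0$ on all of $F^{\times}$ as the valuation via the d\'evissage identification $K_0(A \ on \ \liem)\cong K_0(k')\cong\Z$ and the length formula $\mathrm{length}_A(A/fA)=v(f)$, and then separately identify $I$ on $\pi_0$ as the transfer $K_0(k')\to K_0(k)$, i.e.\ multiplication by $[k':k]$. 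Both routes are sound; yours is slightly more conceptual (it isolates the standard fact ``boundary $=$ valuation'' and the transfer formula, which also makes the role of $[k':k]$ transparent), while the paper's is more economical and self-contained in that it only needs its own Lemma \ref{KthLem1} and a direct dimension count, avoiding the extra (routine, but real) compatibility check between the model of $K(X \ on \ \liem)$ used in the definition of $I$ and the d\'evissage equivalence --- exactly the point you correctly flag as the substantive one; the reduction to the uniformizer also lets the paper sidestep discussing signs and the extension of the length formula from $A\setminus\{0\}$ to general $f\in F^{\times}$ by multiplicativity, which your argument uses implicitly.
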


\begin {proof}

Since the composition $K_1(A) \to K_1(F) \to K_0(A \ on \ \liem)$ is zero (as part of a long exact sequence), it is enough to prove that $$F^{\times} \cong K_1(F) \to K_0(A \ on \ \liem) \to K_0(k) \cong \Z$$ maps $z$ to $[k':k]$. By lemma \ref{KthLem1}, the image of $z$ under the above map is equal to the  alternating sum of dimensions (over $k$) of cohomologies of the complex $$ \xymatrixrowsep{0.3pc}\xymatrix{A \ar[r]^{z} & A \\ -1 & 0} $$ which is $[k':k]$.

\end {proof}

\subsubsection {The tame symbol}

\begin {claim} \label{ClaimLocalTameSymbolCalc}

The morphism $$F^{\times} \wedge F^{\times} \cong K_1(F) \wedge K_1(F) \to K_2(F) \xrightarrow{\mu^2} K_1(k) \cong k^{\times}$$ is given by:

$$ f \wedge g \mapsto N_{k' / k} \left( (-1)^{v(f) \cdot v(g)} \left[ \frac{f^{v(g)}}{g^{v(f)}} \right] \right). $$

\end {claim}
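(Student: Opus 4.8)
The plan is to reduce the computation of $\mu^2(f\wedge g)$ to an explicit localization/dévissage computation in relative $K_0$, exactly as in the proof of the degree claim, but now one categorical level up. First I would use the long exact sequence of the pair $(A \ on \ \liem)$ to replace $K_2(F)$ by $K_1(A \ on \ \liem)$: since the composition $K_2(A)\to K_2(F)\to K_1(A \ on \ \liem)$ vanishes, and since $\mu^2$ is by construction $I\circ\partial$ applied after the localization map, it suffices to understand the composite $K_1(F)\wedge K_1(F)\to K_2(F)\xrightarrow{\partial} K_1(A \ on \ \liem)\xrightarrow{I} K_1(k)\cong k^\times$, where $I$ is pushforward along $\mathrm{Spec}(A)\to\mathrm{Spec}(k)$ (which on $K_1$ of the category of $\liem$-torsion modules is a transfer/norm-type map). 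The boundary map $\partial$ on products is governed by the standard formula for the tame symbol: on symbols $\{f,g\}$ with $f,g$ units it vanishes, and bilinearity plus antisymmetry reduce everything to the three cases $\{u,u'\}$ (both units), $\{z,u\}$, and $\{z,z\}$.

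Next I would treat these cases. For two units $u,u'\in A^\times$, the symbol $\{u,u'\}$ already lives in $K_2(A)$, so its image under $\partial$ is zero — matching the right-hand side, since $v(u)=v(u')=0$ makes the displayed expression $N_{k'/k}(1)=1$. For $\{z,u\}$, I would invoke the $K$-theory lemma from Appendix B (the analogue of lemma \ref{KthLem1} used in the degree proof) computing $\partial\{z,u\}$ as the class in $K_1(A \ on \ \liem)$ represented by multiplication by $[u]$ on the Koszul-type complex $A\xrightarrow{z}A$; pushing forward to $k$ this becomes the determinant of multiplication by $[u]$ on the finite-dimensional $k$-vector space $A/z\cong k'$ (with a sign/placement bookkeeping that is trivial here since the complex has zero Euler characteristic shift for a unit), i.e. $N_{k'/k}([u])$. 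This is exactly $N_{k'/k}([f^{v(g)}/g^{v(f)}])$ in the case $v(f)=1$, $v(g)=0$. Finally, for $\{z,z\}=\{z,-1\}+\{z,z\}$-type Steinberg manipulation, one uses $\{z,z\}=\{z,-1\}$ in $K_2$, reducing to the previous case with $u=-1$, which produces the sign $(-1)^{v(f)v(g)}=(-1)$; assembling by bilinearity over a general $f=z^{v(f)}u$, $g=z^{v(g)}u'$ yields the stated formula.

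The main obstacle I anticipate is pinning down the two pieces of bookkeeping precisely: (a) identifying the pushforward map $I:K_1(A \ on \ \liem)\to K_1(k)$ with the norm $N_{k'/k}$ composed with the "determinant over $k$" on the relevant torsion complexes — this is the $K_1$-incarnation of the transfer and requires citing the correct statement from Appendix B and checking it is compatible with the model for $K(X_p\ on\ p)$ used in subsection \ref{SubSec:Construction}; and (b) getting the sign $(-1)^{v(f)v(g)}$ to come out right, which amounts to correctly tracking the Koszul sign / the Steinberg relation $\{f,-f\}=0$ through the boundary map. Both are standard (this is, after all, the classical computation of the tame symbol via localization in $K$-theory, going back to Quillen and Milnor), so I would keep the exposition brief: set up the reduction, quote the Appendix B lemma, and dispatch the three symbol types, referencing lemma \ref{LemmaLocalIsEnough} to conclude that this local computation suffices for the global Weil reciprocity statement.
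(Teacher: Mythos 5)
Your proposal follows essentially the same route as the paper's proof: reduce by bilinearity and antisymmetry to the three cases (two units; unit against the uniformizer $z$; $z$ against $z$), kill the unit--unit case via the vanishing of $K_2(A)\to K_2(F)\to K_1(A \ on \ \liem)$ from the localization sequence, compute the $\{z,u\}$ case as the automorphism $[u]$ acting on the Koszul complex $A\xrightarrow{z}A$ via the Appendix B lemmas and push forward to get $N_{k'/k}([u])$, and handle $\{z,z\}$ by the Steinberg relation $\{z,z\}=\{z,-1\}$. This matches the paper's argument, so no further comment is needed.
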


\begin {proof}

We call the above morphism $F^{\times} \wedge F^{\times} \to k^{\times}$, by abuse of notation, $\mu^2$. By bilinearity and anti-symmetry of $\mu^2$, it is enough to verify:

\begin {enumerate}[(i)]

\item For $f,g \in A^{\times}$, $\mu^2 (f \wedge g) = 0$.

\item For $f \in A^{\times}$, $\mu^2 (f \wedge z) = N_{k'/k}( [ f ] )$.

\item $\mu^2 (z \wedge z) = N_{k'/k} (-1)$.

\end {enumerate}

As for the first item, it follows since the following composition is zero (being a part of the localization long exact sequence):

$$ K_2 (A) \to K_2 (F) \to K_1 (A \ on \ k').$$

As for the second item, consider the commutative diagram:

$$ \xymatrix { K_1(A) \wedge K_1 (F) \ar[d] \ar[rr] & & K_1(A) \wedge K_0 (A \ on \ k') \ar[d] \\ K_1(F) \wedge K_1(F) \ar[r] & K_2(F) \ar[r] & K_1(A \ on \ k') \ar[r] & K_1(k) }$$

We have the element $f \wedge z$ in the upper-left group $ K_1(A) \wedge K_1 (F) $, and we should walk it through down, and then all the way right. Using commutativity of the diagram, we can chase instead by the upper path, and using lemma \ref{KthLem2}, the result is represented by the automorphism of the following complex:

$$ \xymatrixrowsep{0.3pc}\xymatrix{A \ar[dd]_{f}\ar[r]^{z} & A \ar[dd]_{f} \\ & \\ A \ar[r]^{z} & A \\ -1 & 0} $$

Taking alternating determinant of cohomology, we see that the above automorphism represents the element $N_{k'/k} ([ f ]) \in k^{\times} \cong K_1(k)$. 

Let us handle the third item on our list. Denote by $\{ \cdot , \cdot \} : K_1(F) \wedge K_1(F) \to K_2(F)$ the multiplication in $K$-theory. Recall the Steinberg relation: $$\{ x, 1-x\} = 0$$  for $x , 1-x \in F^{\times} \cong K_1(F)$. We then calculate: $$ \{z,z\}=\{z,(1-z^{-1})^{-1}\}\{z,1-z\}\{z,-1\}= \{z^{-1},1-z^{-1}\}\{z,1-z\}\{z,-1\} = \{z,-1\} $$ (this calculation appears in \cite[Theorem 2.6]{Sn}). Hence, $ \mu^2 (z \wedge z) = \mu^2 (-1 \wedge z) = N_{k'/k} (-1)$, by (ii) above.
\end {proof}

\subsubsection {The residue}

In this subsection, $k$ is assumed to be infinite\footnote{We suspect that the assumption of $k$ being infinite is unnecessary; Nevertheless, we use this assumption in the proof.}. We consider a base change of our setup from $k$ to $k_{\epsilon} := k [ \epsilon_1 , \epsilon_2 ] / (\epsilon_1^2 , \epsilon_2^2 )$. Thus, we have $A_{\epsilon} := k_{\epsilon} \otimes_k A$, and similarly $F_{\epsilon}$, $X_{\epsilon}$, $B_{\epsilon}$, etc. Hence, the basic morphism of schemes from which we build the symbol map is now $X_{\epsilon} \to B_{\epsilon}$.

\begin {claim}

The morphism $$F_{\epsilon}^{\times} \wedge F_{\epsilon}^{\times} \cong K_1(F_{\epsilon}) \wedge K_1(F_{\epsilon}) \to K_2(F_{\epsilon}) \xrightarrow{\mu_{\epsilon}^2} K_1(k_{\epsilon}) \cong k_{\epsilon}^{\times}$$ sends $ (1-\epsilon_1 f) \wedge (1-\epsilon_2 g)$ to $1-\epsilon_1 \epsilon_2 Res(fdg)$ (where $f,g \in F$).

\end {claim}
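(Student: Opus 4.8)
The plan is to reduce the residue computation to the already-established tame symbol computation (Claim \ref{ClaimLocalTameSymbolCalc}) by exploiting the fact that everything is happening over the ring $k_\epsilon$, and that $\mu^2_\epsilon$ is obtained from the degree-one case by base change. First I would note that, by bilinearity and anti-symmetry of the pairing $\mu^2_\epsilon$ together with the multiplicativity of the symbol in each argument, it suffices to evaluate $\mu^2_\epsilon$ on pairs of the special form $(1-\epsilon_1 f)\wedge(1-\epsilon_2 g)$ and to track how the answer depends on $f$ and $g$; the key structural observation is that $1-\epsilon_1 f$ and $1-\epsilon_2 g$ are units in $A_\epsilon$ congruent to $1$ modulo the nilpotents, so their valuations (in the obvious sense on $F_\epsilon$) vanish. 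Hence the "valuation part" of the tame-symbol formula contributes nothing, and one is left with the "residue-type" contribution coming purely from the unit parts. Concretely, I would use the decompositions $f = z^{v(f)} u$, $g = z^{v(g)} w$ with $u, w$ units, expand $1-\epsilon_1 f$ and $1-\epsilon_2 g$ multiplicatively in $K_1(F_\epsilon)$, and apply Claim \ref{ClaimLocalTameSymbolCalc} (in its $k_\epsilon$-linear form) term by term.

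The main computational step is then to identify the surviving term with $1 - \epsilon_1\epsilon_2 \operatorname{Res}(f\,dg)$. Here I would proceed as in \cite{Sn} or by a direct bilinearity argument: writing $f = \sum a_i z^i$ and $g = \sum b_j z^j$ as Laurent expansions in the uniformizer $z$ (here using that $A$ is a regular local ring of dimension one, so $\widehat A \cong k'[[z]]$ and $\operatorname{Res}$ is the coefficient of $z^{-1}$ in $f\,dg$), I would reduce to the case of monomials $f = z^m$, $g = z^n$. For such monomials $1-\epsilon_1 z^m$ and $1-\epsilon_2 z^n$ are units-times-powers-of-$z$, and the tame symbol formula from Claim \ref{ClaimLocalTameSymbolCalc} — applied over the base $k_\epsilon$, with the norm $N_{k'_\epsilon/k_\epsilon}$ — yields an explicit element of $k_\epsilon^\times$; a bookkeeping of signs $(-1)^{v\cdot v}$ and of the bracket $[f^{v(g)}/g^{v(f)}]$ then gives $1-\epsilon_1\epsilon_2 m\,\delta_{m+n,0}$, which matches $1 - \epsilon_1\epsilon_2\operatorname{Res}(z^m d(z^n)) = 1 - \epsilon_1\epsilon_2\, n\,\delta_{m+n,0}$ up to the antisymmetry/integration-by-parts identity $\operatorname{Res}(f\,dg) = -\operatorname{Res}(g\,df)$. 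One should be careful that the residue is well-defined independently of the choice of uniformizer $z$; this is classical (\cite[II.7]{Ser}) and I would simply cite it, having fixed $z$ at the start of the section.

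The place where I expect genuine friction, rather than routine bookkeeping, is the passage from finite Laurent polynomials to arbitrary elements $f, g \in F$: the formula $\mu^2_\epsilon((1-\epsilon_1 f)\wedge(1-\epsilon_2 g)) = 1-\epsilon_1\epsilon_2\operatorname{Res}(f\,dg)$ is "bilinear" in a way that requires a continuity or finiteness input, since $F$ is not spanned over $k$ by finitely many monomials. This is presumably exactly where the hypothesis that $k$ is infinite (flagged in the footnote) gets used: I would try to reduce to a Zariski-density argument, showing that both sides, viewed as functions of the coefficients of $f$ and $g$ in a suitable finite-dimensional family, agree on a dense set and hence everywhere; alternatively one localizes so that $f, g$ and $1/f, 1/g$ all have poles of bounded order and works in a fixed finite-dimensional $k$-subspace. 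Modulo this density/finiteness maneuver and the identification of the relevant $K$-theory products with the symbols computed in Claim \ref{ClaimLocalTameSymbolCalc}, the argument is a direct computation; the rest is sign-chasing with $\epsilon_1^2 = \epsilon_2^2 = 0$.
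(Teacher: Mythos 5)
The outer structure of your argument (reducing to monomials via $1-\epsilon_1(f_1+f_2)=(1-\epsilon_1 f_1)(1-\epsilon_1 f_2)$ and handling high-valuation tails by a finiteness argument) parallels the paper's steps, but the heart of your proof---evaluating the symbol on monomials by applying Claim \ref{ClaimLocalTameSymbolCalc} ``in its $k_\epsilon$-linear form''---does not work, and that is exactly where the real content lies. Claim \ref{ClaimLocalTameSymbolCalc} is proved over a field base for a regular local domain; its proof reduces everything to the generators $A^{\times}$ and $z$ of $F^{\times}$, and this reduction fails over $k_\epsilon$: the group $F_\epsilon^{\times}$ is \emph{not} generated by $A_\epsilon^{\times}$ and $z$, because $1-\epsilon_1 f$ with $f$ having a pole is congruent to $1$ modulo nilpotents yet does not lie in $A_\epsilon$. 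Your ``key structural observation'' that $1-\epsilon_1 f$ and $1-\epsilon_2 g$ are units of $A_\epsilon$ is false precisely in the interesting case; and if it were true, the $k_\epsilon$-analogue of item (i) in the proof of Claim \ref{ClaimLocalTameSymbolCalc} would force the symbol to equal $1$, not a residue. Formally plugging these elements (whose ``valuations'' vanish) into the tame-symbol formula likewise returns $1$ and can never produce the term $\epsilon_1\epsilon_2\,n$: the residue is exactly the part of the symbol over the Artinian base that the naive tame-symbol formula does not see (this is the Contou-Carr\`{e}re phenomenon). So the monomial case must be computed by hand in $K$-theory, and your proposal omits that computation entirely.

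For comparison, the paper proceeds as follows: it first shows $\mu^2_\epsilon(1-\epsilon_1 f,1-\epsilon_2 g)=1-\epsilon_1\epsilon_2 R(f,g)$ with $R$ bilinear, using base changes $\epsilon_i\mapsto 0$ and $\epsilon_1\mapsto\alpha^{-1}\epsilon_1$; it kills $R(z^n,z^m)$ for $n+m\neq 0$ by passing to the completion (Lemma \ref{LemmaPassToCompletion}) and rescaling $z\mapsto\alpha z$ with $\alpha^{n+m}\neq 1$---this, not a Zariski-density argument, is where the infiniteness of $k$ enters; and it computes the key value $R(z^{-n},z^n)=n$ via Lemmas \ref{KthLem1} and \ref{KthLem2}: writing $1-\epsilon_1 z^{-n}=(z^n-\epsilon_1)/z^n$, the symbol becomes the determinant of multiplication by $1-\epsilon_2 z^n$ on the cohomology of the complex $A_\epsilon\xrightarrow{\,z^n-\alpha\epsilon_1\,}A_\epsilon$, a free $k_\epsilon$-module of rank $n$ on which this multiplication acts by $1-\alpha\epsilon_1\epsilon_2$, giving $(1-\alpha\epsilon_1\epsilon_2)^n=1-n\alpha\epsilon_1\epsilon_2$. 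Finally, $R(z^{-n},f)=0$ for $v(f)\gg n$ because the corresponding operator is the identity, which is what closes the passage from Laurent tails to general $f,g\in F$ that you flagged as the main friction point. Without some replacement for the completion/rescaling step and for the explicit determinant computation, your outline does not yet constitute a proof.
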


\begin {proof} In this proof let us denote by $\mu^2$ the morphism $F_{\epsilon}^{\times} \wedge F_{\epsilon}^{\times} \to k_{\epsilon}^{\times}$ as in the statement.

\textit{(a)} Let us first show that the expression $\mu^2 (1-\epsilon_1 f , 1-\epsilon_2 g)$ is of the form  $1-\epsilon_1 \epsilon_2 R(f,g)$, where $R(f,g) \in k$. In other words, the "constant term" is $1$, and there are no "linear terms". Towards this end, we perform "base change", sending $\epsilon_2 \mapsto 0$. The operation $\mu^2$ commutes with such base change. We depict it as follows:

$$ \xymatrix { \mu^2 (1-\epsilon_1 f , 1-\epsilon_2 g) \ar@{|->}[d] \ar@{=}[r] & a+b\epsilon_1 +c\epsilon_2+d\epsilon_1 \epsilon_2 \ar@{|->}[d] \\ \mu^2 (1-\epsilon_1 f , 1 ) \ar@{=}[r] & a+b\epsilon_1}$$

Here, the vertical assignment is base change, from $k_{\epsilon}$ to $k_{\epsilon}/(\epsilon_2)$. Note that the lower-left element is $1$ (by bi-multiplicativity of $\mu^2$), so that we get $a=1$ and $b=0$. Similarly one gets $c=0$.

\textit{(b)} We notice that $R(f,g)$ is bi-linear. The bi-additivity follows immediately from the bi-multiplicativity of $\mu^2$ and \textit{(a)}. Next, let us show that $R(\alpha f,g) = \alpha R(f,g)$ for every $\alpha \in k$ (the homogeneity in the second variable is shown analogously). In case $\alpha = 0$, it is clear. Otherwise, we get the equality by performing "base change", sending $\epsilon_1 \mapsto \alpha^{-1} \epsilon_1$.

\textit{(c)} We now show the following:

\begin {enumerate}[(1)]
\item $R(z^n,z^m) = 0$ for $n,m \in \Z$, provided $n+m \neq 0$.
\item $R(z^{-n},z^n) = n$ for $n \in \Z$.
\item $R(z^{-n} , f) = 0$ for $n \in \Z_{\ge 0}$, provided that $v(f) \gg n$.
\end {enumerate}

To show the first item, let us first notice that by using lemma \ref{LemmaPassToCompletion}, it is enough to verify the claim in the case when $A$ is complete, and so we do. Consider the automorphism $z \mapsto \alpha z$, where $\alpha \in k^{\times}$. We notice that it does not alter the symbol $\mu^2$, since it commutes with passing to the quotient $A \mapsto A / \liem$. Thus, we have $R(z^n,z^m)=R((\alpha z)^n,(\alpha z)^m)$. By bilinearity (\textit{(b)}), we get $R(z^n,z^m) =\alpha^{n+m} R(z^n,z^m)$. Choosing $\alpha$ so that $\alpha^{n+m} \neq 1$, we conclude $R(z^n,z^m)=0$. Such a choice of $\alpha$ is possible since $k$ is infinite and $n+m \neq 0$.

To show the second item, note that:

$$ \mu^2 (1-\epsilon_1 z^{-n} , 1 - \epsilon_2 z^n ) = \frac{\mu^2 (z^n-\epsilon_1 , 1 - \epsilon_2 z^n )}{\mu^2 (z^n , 1 - \epsilon_2 z^n )},$$

and hence it is enough to calculate $\mu^2 (z^n-\alpha \epsilon_1 , 1 - \epsilon_2 z^n )$ (where $\alpha \in k$). By lemmas \ref{KthLem1} and \ref{KthLem2}, we should calculate the determinant of multiplication by $1 - \epsilon_2 z^n$ on the cohomology of:

$$ \xymatrixrowsep{0.3pc}\xymatrix{A_{\epsilon} \ar[r]^{z^n-\alpha \epsilon_1} & A_{\epsilon} \\ -1 & 0}.$$

The only non-zero cohomology is the $0$-th one. It is a free $k_{\epsilon}$-module (with basis $1,z,\ldots, z^{n-1}$). Multiplication by $1 - \epsilon_2 z^n$ is just multiplication by $1-\alpha \epsilon_1 \epsilon_2$. Thus, the determinant equals $(1-\alpha \epsilon_1 \epsilon_2)^n = 1-n\alpha \epsilon_1 \epsilon_2$, and consequently $R(z^{-n},z^n)=n$.

The third item is verified similarly to the second one (when $v(f)\gg n$, the operator whose determinant we should consider is just identity).

\textit{(d)} By breaking $f$ and $g$ into sums of monomials in $z$ and a reminder of large enough valuation, the proposition follows from \textit{(a)},\textit{(b)}, and \textit{(c)} .

\end {proof}

\begin{remark}
One could obtain the residue symbol differently, by considering $k_{\epsilon} := k[\epsilon] / (\epsilon^3)$. Then, $\mu^2 (1-\epsilon f , 1- \epsilon g) = 1 - \epsilon^2 Res(fdg)$.
\end{remark}

\subsection{The case $dim(X)>1$}

In this subsection, we drop the assumption that $A$ is $1$-dimensional; We denote the Krull dimension of $A$ by $n$. 

\subsubsection {The Parshin symbol}

Fix a full flag $$\niceF : x_n < \ldots < x_0$$ in $X$, corresponding to a chain of prime ideals $$0 = \liep_0 \subsetneq \ldots \subsetneq \liep_n = \liem.$$ Consider $X^i := \overline{x_i}$ as an integral closed subscheme of $X$. We obtain a symbol:

$$ \mu = \mu_{\niceF} : K(F) \to \Sigma^n K(k). $$

As in \ref{par:Parshin law} consider the resulting map $\mu^{n+1}_{\niceF} : \bigwedge^{n+1} F^{\times} \to k^{\times}$. In \ref{par:Parshin law} we essentially wrote a formula for this map (which we now want to verify), under the assumption that our flag is regular. In order to compute this map "recursively", we will use Quillen's devissage (lemma \ref{LemmaDevissage}) -- application of which will be possible due to regularity of $\niceF$.

\begin {claim}

The symbol $\mu_{\niceF}:K(F_X)\to \Sigma^n K(k)$ equals to the following composition:

$$\xymatrix{ K(F_X)\ar[r] & \Sigma K(X_{x_1} \ on \ x_1)  &  \Sigma K(F_{X^1}) \ar[r] \ar[l]_{\ \ \ \ \ \sim} & \Sigma^2 K(X^1_{x_2} \ on \ x_2) \\
K(F_{X^2}) \ar[r] \ar`u[r]`[urrr]^{\sim}[urrr]& \ldots \ar[r] &  \Sigma^n K(F_{X^n})\ar[r] &  \Sigma^n K(k), } $$

where the arrows $\xleftarrow{\sim}$ stand for Quillen's devissage.

\end {claim}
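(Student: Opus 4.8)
The plan is to prove the claim by induction on $n=\dim X$, peeling off one step of the symbol map at each stage. Recall that, by definition,
$$\mu_{\niceF}=I\circ sel_{\{x_n\}}\circ\Psi^{n-1}\circ sel_{\{x_{n-1}\}}\circ\cdots\circ sel_{\{x_1\}}\circ\Psi^0 .$$
The first, purely formal, observation is that unwinding the selection maps turns $\mu_{\niceF}$ into a composition of "single-point" maps $\delta_i=sel_{\{x_{i+1}\}}\circ\Psi^i$ restricted to the summand $\Sigma^iK(X_{x_i} \ on \ x_i)$ of $\Sigma^iQ^iK(X)$, followed by the pushforward $I\colon\Sigma^nK(X_{x_n} \ on \ x_n)\to\Sigma^nK(B)$. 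The second observation is the \emph{locality of the codimension-filtration boundary}: by the construction of $\partial_d$ and $p_{d+1}$ in \cite[(10.3.6)]{TT}, the $x_1$-component $sel_{\{x_1\}}\circ\Psi^0$ is precisely the boundary map $b_0\colon K(F_X)\to\Sigma K(X_{x_1} \ on \ x_1)$ of the localization sequence of the local scheme $X_{x_1}=Spec(\niceO_{X,x_1})$ (whose punctured spectrum is $Spec(F_X)$).

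Next I would invoke Quillen's devissage (lemma \ref{LemmaDevissage}). Since $\niceF$ is a regular flag, the ring $\niceO_{X^0,x_1}=\niceO_{X,x_1}$ is regular, so $K$-theory with supports coincides with $G$-theory with supports and devissage yields an equivalence $\mathrm{dev}\colon K(F_{X^1})=K(k(x_1))\xrightarrow{\ \sim\ }K(X_{x_1} \ on \ x_1)$. This accounts for the first two arrows of the asserted zigzag, and reduces the claim to the one-step identity
$$\mu_{\niceF}\ =\ \Sigma\big(\mu^{X^1}_{\niceF'}\big)\circ(\mathrm{dev})^{-1}\circ b_0 ,$$
where $\niceF'\colon x_n<\cdots<x_1$ is the induced flag on the integral closed subscheme $X^1=\overline{x_1}$ and $\mu^{X^1}_{\niceF'}$ is its symbol map for $X^1\to B$. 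Here one checks that $X^1\to B$ still satisfies the standing assumptions (its codimension-$(n-1)$ points are codimension-$n$ points of $X$) and that $\niceF'$ is again regular in $X^1$, since the subvarieties occurring in $\niceF'$ are $X^2\supset\cdots\supset X^n$ and the relevant local rings $\niceO_{X^{i-1},x_i}$ are a subfamily of those for $\niceF$. Thus the induction hypothesis applies to $\mu^{X^1}_{\niceF'}$ and unwinds it into the remaining part of the zigzag.

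The heart of the argument is the one-step identity above, which I would prove by comparing the codimension filtrations of $X$ and of the closed subscheme $X^1\hookrightarrow X$: pushforward along this closed immersion induces, at each point $p$ lying on $X^1$, a devissage equivalence $G((X^1)_p \ on \ p)\xrightarrow{\ \sim\ }G(X_p \ on \ p)$, and these intertwine $\partial^{X^1}_{d}$ with $\partial^{X}_{d+1}$ (by functoriality of localization sequences) and intertwine $I^{X^1}$ with $I^{X}$ (by functoriality of pushforward to $B$). The main obstacle, and the only place regularity of the flag is genuinely used, is the bookkeeping of which comparison maps are equivalences on $K$-theory rather than merely on $G$-theory: $X$ and $X^1$ need not be regular away from the flag, so one must verify that every devissage used in the unwinding is applied to one of the regular local rings $\niceO_{X^{i-1},x_i}$, i.e. only to $K$-theory with supports at a point on the flag (where $K=G$), never at an arbitrary point of $X$ or $X^1$. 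Finally, the base case $n=1$ is immediate: there $\mu_{\niceF}=I\circ\partial_0$ and the claim reduces to the standard identity $I\circ\mathrm{dev}=N_{k'/k}$, expressing compatibility of the pushforward along $Spec(k')\hookrightarrow X\to B$ with devissage.
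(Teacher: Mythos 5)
The paper states this claim without proof, so your argument stands on its own; its architecture (peel off the first localization boundary, devissage at $x_1$, induct on dimension) is the natural one, and your identification of $sel_{\{x_1\}}\circ\Psi^0$ with the boundary of the localization sequence of $X_{x_1}$, as well as the base case, are fine. The genuine gap is in how the inductive step is set up. To apply the induction hypothesis you must invoke the claim for the ambient scheme $X^1=\overline{x_1}$, which falls outside the section's standing hypotheses ($A$ is assumed regular, but $A/\liep_1$ need not be), so you are in effect proving the flag-regular generalization; but then the comparison maps you propose between the codimension filtrations of consecutive subvarieties need not exist on $K$-theory with supports: the pushforward $K((X^{i+1})_{x_j} \ on \ x_j)\to K((X^{i})_{x_j} \ on \ x_j)$ requires $\niceO_{X^{i+1},x_j}$ to have finite Tor-dimension over $\niceO_{X^{i},x_j}$, and flag regularity only controls the ``adjacent'' rings $\niceO_{X^{i-1},x_i}$, not $\niceO_{X^{i},x_j}$ for $j>i+1$. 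Your diagnosis that the only delicate point is ``which comparison maps are equivalences'' and that devissage is only ever applied ``at a point on the flag (where $K=G$)'' misstates the issue: lying on the flag does not give $K=G$ with supports there (that needs regularity of the relevant ambient local ring), and the symbol $\mu^{X^i}_{\niceF'}$ of a possibly singular $X^i$ is by definition built from $K$-theory with supports at \emph{all} of its deeper flag points, exactly where neither the $K$-theoretic pushforwards nor the passage to $G$-theory that you gesture at are available.

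The gap is reparable, and the repair should be made explicit. One route: since the map $I$ is modeled on complexes with coherent cohomology, the natural map from $K$-theory with supports to $G$-theory with supports intertwines the entire composite defining a symbol map with its $G$-theoretic analogue and is an equivalence on the source $K(F)$; hence all symbol maps can be computed from the $G$-theoretic coniveau towers, where closed pushforwards always exist and Quillen devissage holds at every point without regularity ($G((X^i)_p \ on \ p)\simeq G(k(p))\simeq G(X_p \ on \ p)$), so your intertwining of boundaries and of $I$ goes through unconditionally; flag regularity is then used once per step, only to rewrite the resulting $G$-theoretic zigzag as the $K$-theoretic one of the statement via $K(F_{X^i})\xrightarrow{\ \sim\ }K(X^{i-1}_{x_i} \ on \ x_i)$. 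Alternatively, avoid induction on the ambient scheme altogether and telescope directly against the original regular $X$: for each $d$, the square relating the local boundary $K(F_{X^d})\to\Sigma K(X^{d}_{x_{d+1}} \ on \ x_{d+1})$ to the $(x_d,x_{d+1})$-component of $\Psi^d$, via the pushforwards into the regular local schemes $X_{x_d}$ and $X_{x_{d+1}}$, commutes and only the devissage equivalences of the statement themselves ever need to be inverted; this stays entirely within the section's hypotheses and needs no statement about symbols of the intermediate $X^i$.
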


In view of this claim, $\mu^{n+1}_{\niceF}$ equals to the composition

$$\bigwedge^{n+1} F_{X}^{\times} \to K_{n+1}(F_X) \xrightarrow{\partial_0} K_n(F_{X_1}) \xrightarrow{\partial_1} \ldots \xrightarrow{\partial_{n-1}} K_1(F_{X_n}) \to K_1(k)$$

where $\partial_i$ is the composition of the boundary map and the inverse of the devissage. 

The following lemma will allow us, in principle, to calculate $\mu^{n+1}_{\niceF} (f_1 , \ldots , f_{n+1})$ for any $f_1 , \ldots , f_{n+1} \in F^{\times}$.

\begin {lemma}

Let $R$ be a $1$-dim. regular local Noetherian ring with maximal ideal $\lien$, residue field $\ell$ and fraction field $L$. Let $z \in R$ a uniformizer. Consider the map $$K(L) \to \Sigma K(R \ on \ \lien ) \xleftarrow{\sim} \Sigma K(\ell)$$ -- composition of boundary with devissage. Using it we construct a map $$\nu^m : \bigwedge^m L^{\times} \to K_m (L) \to K_{m-1} (\ell).$$ The following hold:

\begin {enumerate}[(i)]
\item $\nu^m (f_1 , \ldots , f_m) = 0$ for $f_1 , \ldots , f_m \in R^{\times}$.
\item $\nu^m (f_1 , \ldots , f_{m-2} , z, z) = \nu^m (f_1 , \ldots , f_{m-2} , -1, z)$ for $f_1 , \ldots , f_{m-2} \in R^{\times}$.
\item $\nu^m (f_1 , \ldots , f_{m-1} , z) = [f_1] \wedge \ldots \wedge [f_{m-1}]$ for $f_1 , \ldots , f_{m-1} \in R^{\times}$ (we recall that $[ f ]$ denotes the residue in $\ell^{\times}$ of $f\in R^{\times}$, considered as an element of $K_1(\ell)$ in the case at hand).
\end {enumerate}

\end {lemma}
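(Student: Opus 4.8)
The plan is to reduce the three assertions to explicit computations in $K$-theory of the $1$-dimensional regular local ring $R$, exactly as was done in the curve case (claims in §3.1), but now carrying along $m-2$ or $m-1$ extra "unit" factors which will simply be transported by functoriality of the product. Throughout, write $\nu^m(f_1,\ldots,f_m)$ for the image of $f_1\wedge\cdots\wedge f_m$ under $\bigwedge^m L^\times \to K_m(L)\xrightarrow{\partial_0} K_{m-1}(R \ on \ \lien) \xleftarrow{\sim} K_{m-1}(\ell)$. The key structural input is that the product $K_1(R)\wedge K_{m-1}(L)\to K_m(L)$ and the boundary map $\partial_0$ together with the devissage isomorphism fit into a commutative diagram exactly as in the proof of claim \ref{ClaimLocalTameSymbolCalc}; I would first record this compatibility (the boundary map in the localization sequence is a $K_\bullet(R)$-module map, and devissage is compatible with the module structure), so that "pulling out" a unit factor $f_i\in R^\times$ becomes legitimate.

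For item (i), since all $f_j$ lie in $R^\times \cong K_1(R)$, the element $f_1\wedge\cdots\wedge f_m$ is the image of a class in $K_m(R)$ under $K_m(R)\to K_m(L)$; but the composition $K_m(R)\to K_m(L)\xrightarrow{\partial_0} K_{m-1}(R \ on \ \lien)$ is zero, being two consecutive maps in the localization long exact sequence. Hence $\nu^m(f_1,\ldots,f_m)=0$. This is the direct analog of part (i) in the tame-symbol proof.

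For item (iii), I would use the module-map compatibility to peel off $f_1,\ldots,f_{m-1}\in R^\times$, reducing to $\nu^1(z)$ — i.e., to the statement that $\partial_0(z)\in K_0(R\ on\ \lien)$ corresponds under devissage to $1\in \Z\cong K_0(\ell)$ — tensored against $[f_1]\wedge\cdots\wedge[f_{m-1}]$. Concretely, the product $f_1\cdots f_{m-1}\cdot z$ maps under $\partial_0$ to $(f_1\cdots f_{m-1})\cdot\partial_0(z)$ in the $K_\bullet(R)$-module $K_\bullet(R \ on\ \lien)$; applying devissage and using that devissage identifies the $K_\bullet(R)$-action with the $K_\bullet(\ell)$-action via $R^\times\to\ell^\times$, one gets $[f_1]\wedge\cdots\wedge[f_{m-1}]\wedge(\text{devissage of }\partial_0 z)$. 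The computation that $\partial_0(z)$ is the class of the Koszul-type complex $(A\xrightarrow{z}A)$ whose alternating Euler characteristic is $1\cdot[\ell:\ell]=1$ is precisely lemma \ref{KthLem1} applied with uniformizer $z$; this gives $\nu^m(f_1,\ldots,f_{m-1},z)=[f_1]\wedge\cdots\wedge[f_{m-1}]$. For item (ii), I would invoke the Steinberg relation in $K_2(L)$ exactly as in the third bullet of the proof of claim \ref{ClaimLocalTameSymbolCalc}: the identity $\{z,z\}=\{z,-1\}$ holds in $K_2(L)$, and multiplying by the fixed class $f_1\wedge\cdots\wedge f_{m-2}$ (which is legitimate since the product is associative and graded-commutative) gives $f_1\wedge\cdots\wedge f_{m-2}\wedge z\wedge z = f_1\wedge\cdots\wedge f_{m-2}\wedge(-1)\wedge z$ already in $K_m(L)$; applying $\nu^m$ to both sides yields the claim. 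Note this step does not even require the boundary map — it is an identity upstairs in $K_m(L)$.

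The main obstacle I anticipate is not any single computation but pinning down the module-structure compatibilities cleanly: one must know that (a) $\partial_0\colon K_\bullet(L)\to K_{\bullet-1}(R\ on\ \lien)$ is a morphism of graded $K_\bullet(R)$-modules, and (b) Quillen's devissage equivalence $K(\ell)\xrightarrow{\sim}K(R\ on\ \lien)$ is a morphism of $K_\bullet(R)$-module spectra, where $K_\bullet(R)$ acts on $K_\bullet(\ell)$ through the ring map $R\to\ell$. Both are standard (the projection formula for (a), and the fact that devissage is induced by a functor compatible with tensoring by $R$-modules for (b)), but they are what licenses every "peel off the units" step; once they are in hand, items (i) and (iii) are immediate and item (ii) is a formal consequence of Steinberg. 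I would therefore spend most of the write-up stating these compatibilities precisely (citing \cite{TT} and the relevant properties of devissage) and then dispatch (i), (ii), (iii) in a few lines each.
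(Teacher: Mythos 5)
Your proposal is correct and follows essentially the same route as the paper's proof: (i) via the vanishing of the composition $K_m(R)\to K_m(L)\to K_{m-1}(R\ on\ \lien)$ in the localization sequence, (ii) via the Steinberg identity $\{z,z\}=\{z,-1\}$ already in $K_m(L)$, and (iii) via the compatibility of both the boundary map and the devissage equivalence with the $K_\bullet(R)$-module structure (the paper encodes exactly these two compatibilities in its commutative diagram), together with the identification of $\partial_0(z)$ with the class of the complex $R\xrightarrow{z}R$, i.e.\ $1\in K_0(\ell)$.
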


\begin {proof}

The first item is clear, since $\nu^m (f_1 , \ldots , f_m)$ is the value of the composition $K_{m} (R) \to K_{m} (L) \to K_{m-1} ( R \ on \ \lien)$ on $ f_1 \wedge \ldots \wedge f_m  \in K_{m} (R)$, and the composition is zero as part of a long exact sequence.

The second item follows from Steinberg relation (like in the proof of claim \ref{ClaimLocalTameSymbolCalc}).

The third item follows from the commutativity of the following diagram:

$$ \xymatrix { K_{m-1}(R) \wedge K_1 (L) \ar[r] \ar[dd] & K_{m-1} (R) \wedge K_0 (R \ on \ \lien) \ar[dd] & K_{m-1}(R) \wedge K_0 (\ell) \ar[l]_{\ \ \ \sim} \ar[d] \\ & & K_{m-1}(\ell) \wedge K_0 (\ell) \ar[d] \\ K_m(L) \ar[r] & K_{m-1} (R \ on \ \lien) & K_{m-1}(\ell) \ar[l]_{\sim} }$$

Here the left square commutes as the boundary morphism is a morphism of $K(A)$-modules, while the right square commutes as Quillen's devissage morphism is a morphism of $K(A)$-modules.

Note that the element $\nu^m (f_1 , \ldots , f_{m-1} , z)$ is the result of going right on the lower line, applied to $f_1 \wedge \ldots \wedge f_{m-1} \wedge z$. However, this element comes from an element at the upper-left corner, which we can chase through the right on the upper line, and then to the lower-left corner through the right line.

\end {proof}

\subsection{Auxiliary lemmas} \label{SubSectAuxLemm}

We state two lemmas which are used above, and whose proof is straightforward.

\begin {lemma} \label{LemmaLocalIsEnough} Let $X \to B$ be as in \ref{AssumptionsOnXandB}. Let $$\niceF: x_n < x_{n-1} < \ldots < x_0 = \gamma$$ be a full flag of points in $|X|$. Writing $p := x_n$, we consider also the setting $X_p \to B$ and the obvious flag $\niceF_p$ on $X_p$ induced by $\niceF$. We have two symbol maps; $$\mu_{\niceF}: K(F) \to \Sigma^n K(k)$$ and $$ \mu_{\niceF_p}: K(F) \to \Sigma^n K(k) $$ (note that the function field of $X_p$ is identified with $F$). Then these two symbol maps are equal.
\end {lemma}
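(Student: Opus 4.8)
The plan is to trace both symbol maps through the construction in subsection \ref{SubSec:Construction} and to exploit the fact that the canonical morphism $j: X_p \to X$ induces, on the relevant pieces of the codimension filtration, a compatible family of maps. The key observation is that $\mu_{\niceF}$, by definition, only ever "sees" the summands of $Q^i K(X)$ indexed by the points $x_0, x_1, \dots, x_n$ of the flag, because each application of $sel_{C(\niceF)^i}$ kills all other summands. Now the points $x_0 < x_1 < \dots < x_n$ of $|X|$ all lie on $X_p = Spec(\niceO_{X,p})$, where $x_i$ has codimension $i$ there as well (since $p = x_n$ is the closed point of $X_p$ and the codimension of $x_i$ in $X_p$ equals $n - (n-i) = i$, matching the codimension in $X$ because $X$ is irreducible and $x_i \le p$). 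Moreover, for each such $x_i$ the local ring is unchanged: $\niceO_{X_p, x_i} = \niceO_{X, x_i}$, so that $K((X_p)_{x_i} \ on \ x_i) = K(X_{x_i} \ on \ x_i)$ canonically, and likewise $F_{X_p} = \niceO_{X_p,\gamma} = F$.

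First I would make precise the naturality of the codimension filtration with respect to the flat morphism $j: X_p \to X$ (flatness of a localization), following \cite[(10.3.6) and §2]{TT}: pullback along $j$ gives maps $j^* : S^d K(X) \to S^d K(X_p)$ and $j^*: Q^d K(X) \to Q^d K(X_p)$, and these commute with the boundary maps $\partial_d$ and hence with the maps $\Psi^d$; on the summand of $Q^d K(X)$ indexed by a point $x$ lying on $X_p$, the component of $j^*$ landing in the corresponding summand of $Q^d K(X_p)$ is the identity of $K(X_x \ on \ x)$, while the components landing in summands indexed by points \emph{not} on $X_p$ are irrelevant once we precompose with $sel$. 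Next I would check that $j^*$ is compatible with the push-forward maps $I$ defining the two symbols: the composite $Spec(k(x_i)) \to X_p \to X \to B$ equals $Spec(k(x_i)) \to X \to B$, so the proper (indeed finite) push-forward used to build $I^{X_p}$ on the summand indexed by $p$ agrees with the one used to build $I^X$ on that same summand — this is the point where assumption \ref{finitenessassumption} is used, and it applies verbatim to $X_p \to B$ since $|X_p|^n = \{p\}$ and $k(p)$ is the same field. Assembling these compatibilities: both $\mu_{\niceF}$ and $\mu_{\niceF_p}$ are, after inserting all the $sel$'s, equal to the composition $K(F) \to \Sigma K(X_{x_1}\ on\ x_1) \to \cdots \to \Sigma^n K(X_{x_n}\ on\ x_n) \to \Sigma^n K(B)$ running through the common summands, so they coincide.

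The main obstacle I anticipate is purely bookkeeping rather than conceptual: one must verify that the $K$-theory with supports $K(X \ on \ Z)$ and the homotopy fiber sequences of subsection \ref{SubSec:Construction} are genuinely natural in flat morphisms at the level of spectra (not just up to unspecified homotopy), so that the two nested compositions of $\Psi$'s and $sel$'s can be compared strictly; this is handled by choosing the functorial perfect-complex models of \cite[§3]{TT} and noting that $j^*$ is an exact functor on them preserving acyclicity on the relevant closed subsets. A minor secondary point is to confirm that $sel_{C(\niceF)^i}$ on $X$ and on $X_p$ really do pick out "the same" summand — this is immediate since $C(\niceF)^i = \{x_i\}$ and $x_i \in |X_p|^i$ — and that the generic point of $X_p$ maps to $\gamma$, giving the identification of function fields used implicitly throughout.
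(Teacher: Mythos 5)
The paper gives no proof of this lemma (it is declared straightforward), and your plan is exactly the intended argument: functoriality of the coniveau tower $S^d K$, $Q^d K$, $\partial_d$, $\Psi^d$ under the flat localization $j\colon X_p\to X$, the observation that $j^*$ restricted to the summands indexed by the flag points is the identity (since $\niceO_{X_p,x_i}=\niceO_{X,x_i}$, so the local terms $K(X_{x_i}\ on\ x_i)$ literally coincide), and agreement of the pushforwards $I$ to $K(B)$ on the summand at $p$, so the two composites of $\Psi$'s and $sel$'s coincide. Two cosmetic corrections: with the paper's convention the flag points satisfy $p\le x_i$ (not $x_i\le p$), and the equality of the codimension of $x_i$ in $X$ and in $X_p$ should be justified via $\mathrm{codim}=\dim\niceO_{\cdot,x_i}$ together with the identification of local rings, rather than by the dimension count $n-(n-i)=i$, which would implicitly assume catenarity not granted by the hypotheses of \ref{AssumptionsOnXandB}.
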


\begin {lemma} \label{LemmaPassToCompletion}
Let $A$ be a $1$-dim. regular local Noetherian $k$-algebra whose residue field is finite over $k$, and let $\hat{A}$ be its completion. Write, as usual, $X=Spec(A)$ and $B=Spec(k)$, and write $\hat{X}=Spec(\hat{A})$. Also, denote by $F$ (resp. $\hat{F}$) the fraction field of $A$ (resp. $\hat{A}$). Associated to the unique full flag in $X$ (resp. $\hat{X}$) we have the symbol $K(F) \to \Sigma K(k)$ (resp. $K(\hat{F}) \to \Sigma K(k)$). Then the diagram

$$ \xymatrix{K(F) \ar[rr] \ar[rd] & & K(\hat{F}) \ar[ld] \\ & \Sigma K(k) &} $$

commutes.

\end {lemma}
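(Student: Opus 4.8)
We must show that for a $1$-dimensional regular local Noetherian $k$-algebra $A$ with residue field finite over $k$, the symbol map $K(F) \to \Sigma K(k)$ attached to the unique flag in $X = \mathrm{Spec}(A)$ agrees, after composing with the map $K(F) \to K(\hat{F})$ induced by completion, with the symbol map $K(\hat{F}) \to \Sigma K(k)$ attached to the unique flag in $\hat{X} = \mathrm{Spec}(\hat{A})$.

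**Plan.** The strategy is to unwind the definition $\mu_C = I \circ sel_{C^1} \circ \Psi^0$ in the $1$-dimensional case and show that each of the three pieces — the boundary map $\partial_0$, the projection onto the closed point, and the pushforward $I$ to $K(k)$ — is compatible with the map induced by $A \to \hat{A}$. First I would set up the morphism of schemes $\hat{X} \to X$ (both over $B = \mathrm{Spec}(k)$); since $A$ is regular local of dimension $1$, it is in particular excellent/Noetherian and the map $\hat{A} \to A$ is flat with $\hat{A}/\hat{\frakm} = A/\frakm = k'$, so the closed point of $\hat{X}$ maps to the closed point of $X$ and the generic point maps to the generic point (identifying the function fields' $K$-theories via $K(F) \to K(\hat{F})$, though note these fields are genuinely different — only the residue field is preserved). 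The key diagram to contemplate is the map of localization fiber sequences
$$ K(A) \to K(F) \xrightarrow{\partial_0} \Sigma K(A \ on \ \frakm) $$
$$ K(\hat{A}) \to K(\hat{F}) \xrightarrow{\partial_0} \Sigma K(\hat{A} \ on \ \hat{\frakm}) $$
induced by $A \to \hat{A}$; functoriality of the localization sequence in $K$-theory (\cite[Theorem 7.4]{TT}) gives commutativity of the square involving the two $\partial_0$'s. Then, because completion is an isomorphism on the level of the closed point data — more precisely, the natural map $K(A \ on \ \frakm) \to K(\hat{A} \ on \ \hat{\frakm})$ is an equivalence, as both compute $K$-theory of perfect complexes supported at a point with the same henselian/complete local structure at that point (this is essentially the excision/invariance-under-completion statement for $K$-theory with supports, or can be seen directly since $\frakm$-supported perfect complexes over $A$ and over $\hat A$ correspond) — the boundary maps are "the same." Finally the pushforward $I$ to $K(k)$ is defined via pushing a coherent sheaf supported at the closed point to $B$, and since the closed point of $\hat X$ lies over the closed point of $X$ and both residue fields are $k'$ with the same $k$-structure, the two pushforwards agree: concretely, a perfect complex on $\hat A$ acyclic away from $\hat\frakm$ has the same underlying finite-length $k$-module (equivalently, the same image in $K(k)$ computed via alternating sum of dimensions of cohomology) as the corresponding object over $A$.

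**Assembling.** Putting these three compatibilities together, the outer rectangle
$$ \xymatrix{ K(F) \ar[r]^{\partial_0} \ar[d] & \Sigma K(A \ on \ \frakm) \ar[r]^{I} \ar[d]^{\sim} & \Sigma K(k) \ar@{=}[d] \\ K(\hat F) \ar[r]^{\partial_0} & \Sigma K(\hat A \ on \ \hat\frakm) \ar[r]^{I} & \Sigma K(k) } $$
commutes, and the top composite is $\mu_{\niceF}$ while the bottom composite is $\mu_{\niceF_{\hat X}}$, since in dimension $1$ the $sel$ map is just the identity (there is only one closed point in each case and $C^1$ is that point). This is exactly the assertion of the lemma.

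**Main obstacle.** The one step that deserves real care is the claim that $K(A \ on \ \frakm) \to K(\hat A \ on \ \hat\frakm)$ is an equivalence. This is the $K$-theory avatar of the fact that $K$-theory with supports depends only on a formal/henselian neighborhood of the support; one wants to invoke either Thomason–Trobaugh's treatment of $K$-theory with supports together with the fact that $A \to \hat A$ is an isomorphism "along $\frakm$" in the appropriate sense, or to argue directly that the category of perfect complexes of $A$-modules acyclic outside $\frakm$ is equivalent to that of $\hat A$-modules acyclic outside $\hat\frakm$ (which follows because such a complex is quasi-isomorphic to one of finite-length modules, and finite-length $A$-modules coincide with finite-length $\hat A$-modules). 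I would isolate this as a citable fact or a short separate lemma. Everything else — functoriality of the localization sequence, the identification of the pushforward — is formal. Given the paper's remark that the proof is "straightforward," I expect the intended argument is precisely this diagram chase with the support-invariance fact taken as known.
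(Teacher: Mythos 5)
Your argument is correct, and it is what the paper intends: the paper gives no written proof (the lemma is declared "straightforward" in subsection \ref{SubSectAuxLemm}), and the natural argument is exactly your diagram chase — in the local $1$-dimensional case $\mu = I \circ \partial_0$, the left square commutes by functoriality of the localization sequence along the flat map $Spec(\hat{A}) \to Spec(A)$ (which carries closed point to closed point and generic point to generic point), and the right square commutes because a perfect complex acyclic off $\frakm$ has finite-length cohomology, which is unchanged by $- \otimes_A \hat{A}$, so the two pushforwards to $K(B)$ agree. The only comment: the step you single out as the main obstacle — that $K(A \ on \ \frakm) \to K(\hat{A} \ on \ \hat{\frakm})$ is an equivalence — is true (e.g.\ by devissage both are identified with $K(k')$) but is not needed; commutativity of the two squares suffices, so you can drop that claim and the proof becomes entirely formal.
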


%%%%%%%%%%%%%%%%%%%%%%%%%%%%%%%%%%%%%%%%%%%%%%%%%%%%            %%%%%%%%%%%%%%%%%%%%%%%%%%%%%%%%%%%%%%%%%%%%%%%%%%%%
%%%%%%%%%%%%%%%%%%%%%%%%%%%%%%%%%%%%%%%%%%%%%%%%%%%% APPENDICES %%%%%%%%%%%%%%%%%%%%%%%%%%%%%%%%%%%%%%%%%%%%%%%%%%%%
%%%%%%%%%%%%%%%%%%%%%%%%%%%%%%%%%%%%%%%%%%%%%%%%%%%%            %%%%%%%%%%%%%%%%%%%%%%%%%%%%%%%%%%%%%%%%%%%%%%%%%%%%

\appendix
\section{Infinite sums of maps of spectra} \label{App:AppendixA}

\newcommand{\spectra}{\mathcal{S}p}

In this text, we consider spectra as a triangulated category $\spectra$. We recall that a spectrum is called compact, if maps from it commute with small direct sums. An example of a compact spectrum is $\Sigma^k \bbS$, a suspension of the sphere spectrum. The following definitions are inspired by \cite[Appendix A]{Cl}.

\begin {definition}

Let $f_i: \niceS \to \niceT$ ($i \in I$) be a family of maps of spectra, and $f: \niceS \to \niceT$ an additional map. We say that \textbf{$f$ is the sum of the $f_i$} (written $f = \sum_{i\in I} f_i$) if for every compact spectrum $\niceC$, and every element $e \in Hom_{\spectra}( \niceC , \niceS )$, almost all (i.e., all but finitely many) of the maps $f_i \circ e$ are equal to zero, and the sum of all these $f_i \circ e$ is equal to $f \circ e$.

\end {definition}

We note that we don't claim uniqueness of the sum (in whatever sense). In reality, this notion of "summability and summation on compact probes" is derived from a more holistic notion:

\begin {definition}

Let $f_i: \niceS \to \niceT$ ($i \in I$) be a family of maps of spectra, and $f: \niceS \to \niceT$ an additional map. An \textbf{evidence} for $f$ being the sum of the $f_i$ is a map:

$$ g: \niceS \to \bigvee_{i \in I} \niceT, $$

so that when we compose $g$ with the projection to the $i$-th summand we get $f_i$, while when we compose $g$ with the fold map, we get $f$.

\end {definition}

The following is evident:

\begin {claim}

Existence of an evidence for $f$ being the sum of the $f_i$ implies that $f$ is the sum of the $f_i$.

\end {claim}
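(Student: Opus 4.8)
Here is how I would prove the final claim. The statement is formal, so the plan is just to unwind both definitions, using compactness of the probe in an essential way. Fix a compact spectrum $\niceC$ and a map $e\colon \niceC \to \niceS$; I must verify the two requirements in the definition of $f = \sum_{i\in I} f_i$ for this particular $e$: that $f_i\circ e = 0$ for almost all $i$, and that $\sum_{i\in I} f_i\circ e = f\circ e$.

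First I would look at the composite $g\circ e\colon \niceC \to \bigvee_{i\in I}\niceT$. Since $\bigvee_{i\in I}\niceT$ is the coproduct of the $\niceT$'s in $\spectra$ and $\niceC$ is compact, the canonical map
$$\bigoplus_{i\in I}\Hom_{\spectra}(\niceC,\niceT)\ \longrightarrow\ \Hom_{\spectra}\!\left(\niceC,\textstyle\bigvee_{i\in I}\niceT\right)$$
is an isomorphism, the $i$-th coordinate of its inverse being postcomposition with the projection $\pi_i$ onto the $i$-th summand. Applying this to the element $g\circ e$ shows that $\pi_i\circ g\circ e = 0$ for all $i$ outside some finite subset $J\subseteq I$; since $\pi_i\circ g = f_i$ by the definition of an evidence, this is precisely the statement that $f_i\circ e = 0$ for $i\notin J$, which is the first requirement.

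For the summation formula, the key observation is that $g\circ e$ factors through the finite sub-wedge $\bigvee_{i\in J}\niceT$. Writing $\iota_J$ and $\rho_J$ for the inclusion of, respectively the projection onto, the $J$-indexed summands, the maps $g\circ e$ and $\iota_J\circ\rho_J\circ g\circ e$ have the same composite with every $\pi_i$ (for $i\in J$ because $\pi_i\circ\iota_J\circ\rho_J=\pi_i$, for $i\notin J$ because both composites vanish), hence they coincide by the displayed isomorphism. I would then precompose the identity $\nabla\circ g = f$ (with $\nabla$ the fold map) with $e$ and insert this factorization, obtaining $f\circ e = \nabla\circ\iota_J\circ\rho_J\circ g\circ e = \nabla_J\circ\rho_J\circ g\circ e$, where $\nabla_J\colon\bigvee_{i\in J}\niceT\to\niceT$ is the fold map of a \emph{finite} wedge. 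Since a finite wedge is a biproduct and its fold map is the sum of the projections, the right-hand side equals $\sum_{i\in J}\pi_i\circ g\circ e = \sum_{i\in J} f_i\circ e = \sum_{i\in I} f_i\circ e$, the last step because the omitted terms are zero. This yields $f\circ e = \sum_{i\in I} f_i\circ e$, completing the verification.

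I do not expect any genuine obstacle — the claim is, as the paper says, evident. The only point that deserves care is the bookkeeping with infinite wedges: one must articulate that compactness of $\niceC$ is exactly what permits both the conclusion that the support is finite and the replacement of $\bigvee_{i\in I}\niceT$ by the finite wedge $\bigvee_{i\in J}\niceT$, on which the fold map unambiguously "adds up the coordinates". Everything else is diagram-chasing in the triangulated category $\spectra$.
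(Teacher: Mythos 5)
Your proof is correct: it is exactly the standard unwinding of the two definitions, using compactness of the probe $\niceC$ to identify $\Hom_{\spectra}(\niceC,\bigvee_{i\in I}\niceT)$ with $\bigoplus_{i\in I}\Hom_{\spectra}(\niceC,\niceT)$, factoring $g\circ e$ through a finite sub-wedge, and using that a finite wedge is a biproduct whose fold map is the sum of the projections. The paper offers no argument at all (it declares the claim evident), and what you wrote is precisely the intended verification, so there is nothing to compare beyond the level of detail.
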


Let us also note the following two auxiliary claims (whose proof is straightforward):

\begin {claim} \label{claim_sum_composition}

Let $h : \niceU \to \niceS , g : \niceT \to \niceV$. If $f$ is the sum of the $f_i$ (we have evidence for $f$ being the sum of the $f_i$) , then $g \circ f \circ h$ is the sum of the $g \circ f_i \circ h$ (we have evidence for $g \circ f \circ h$ being the sum of the $g \circ f_i \circ h$).

\end {claim}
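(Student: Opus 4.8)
The plan is to prove both assertions of the claim directly from the definitions, starting with the statement about evidence since it is the more structured one and, via the preceding claim, subsumes part of the probe statement.

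First I would treat the evidence statement. Given an evidence $\tilde{g}: \niceS \to \bigvee_{i \in I} \niceT$ for $f = \sum_i f_i$ --- so that $\pi_i \circ \tilde{g} = f_i$ for the projections $\pi_i$ onto the summands, and $\nabla_{\niceT} \circ \tilde{g} = f$ for the fold map $\nabla_{\niceT}: \bigvee_{i} \niceT \to \niceT$ --- I would form the map $\bigvee_i g : \bigvee_i \niceT \to \bigvee_i \niceV$ induced by $g$ on each summand (using that $\spectra$ has all small coproducts), and set $G := (\bigvee_i g) \circ \tilde{g} \circ h$. One then checks $\pi_i \circ G = g \circ \pi_i \circ \tilde{g} \circ h = g \circ f_i \circ h$ and, using the identity $\nabla_{\niceV} \circ (\bigvee_i g) = g \circ \nabla_{\niceT}$ (both sides restrict to $g$ on every summand, hence agree by the universal property of the coproduct), $\nabla_{\niceV} \circ G = g \circ \nabla_{\niceT} \circ \tilde{g} \circ h = g \circ f \circ h$. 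So $G$ is an evidence for $g \circ f \circ h$ being the sum of the $g \circ f_i \circ h$.

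Next I would prove the compact-probe statement directly, without invoking evidence, so as to cover also the hypothesis in which no evidence is assumed. Let $\niceC$ be a compact spectrum and $e \in \Hom_{\spectra}(\niceC, \niceU)$; then $h \circ e : \niceC \to \niceS$ is again a map out of a compact spectrum, so the hypothesis $f = \sum_i f_i$ applies to the probe $h \circ e$: the set $J := \{\, i : f_i \circ h \circ e \neq 0 \,\}$ is finite and $\sum_{i \in J} f_i \circ h \circ e = f \circ h \circ e$. For $i \notin J$ we have $(g \circ f_i \circ h)\circ e = g \circ 0 = 0$, and by additivity of composition in the triangulated category $\spectra$,
$$ \sum_{i \in J} (g \circ f_i \circ h) \circ e = g \circ \Big(\sum_{i \in J} f_i \circ h \circ e\Big) = g \circ f \circ h \circ e. $$
Since $\niceC$ and $e$ were arbitrary, $g \circ f \circ h$ is the sum of the $g \circ f_i \circ h$.

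I do not expect a genuine obstacle here; the work is entirely bookkeeping. The points needing (minor) care are: the existence and basic functoriality of the maps $\bigvee_i g$ and of the fold maps, which rest only on $\spectra$ having all small coproducts and on their universal property; the additivity of composition in $\spectra$, which is what licenses extracting $g$ from the finite sum over $J$; and the observation --- automatic, but worth stating --- that $h \circ e$ is still a probe with compact source, so that the finiteness clause of the hypothesis is actually available. One could alternatively deduce the probe statement from the evidence statement and the preceding claim whenever an evidence is present, but giving the direct argument treats both hypotheses of the claim uniformly.
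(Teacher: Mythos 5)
Your proof is correct and is exactly the straightforward argument the paper has in mind (the paper omits the proof, labelling the claim straightforward): the evidence version via $G=(\bigvee_i g)\circ \tilde{g}\circ h$ together with the universal-property identities $\pi_i\circ(\bigvee_i g)=g\circ\pi_i$ and $\nabla_{\niceV}\circ(\bigvee_i g)=g\circ\nabla_{\niceT}$, and the probe version by feeding the compact probe $h\circ e$ into the hypothesis and using biadditivity of composition. Treating the probe statement directly, rather than deducing it from the evidence statement, is the right call since the claim's two parenthetical readings are parallel hypotheses and only one assumes an evidence.
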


\begin {claim} \label{claim_sum_projections}

Let $\niceS_i$ ($i \in I$) be a collection of spectra, and write $\niceS = \bigvee_{i \in I} \niceS_i$. Then we have evidence for $id$ being the sum of $pr_i$ ($i \in I$), where $id$ is the identity morphism of $\niceS$, while $pr_i$ is the morphism of projection on the $i$-th summand. In particular, $id = \sum_{i \in I} pr_i$.

\end {claim}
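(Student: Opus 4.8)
The plan is to exhibit the required evidence map explicitly, relying only on the fact that in the stable homotopy category $\spectra$ an arbitrary wedge $\bigvee_{i\in I}\niceS_i$ is the categorical coproduct: a morphism out of $\bigvee_{i\in I}\niceS_i$ is the same data as a compatible-free family of morphisms out of the summands $\niceS_i$, and in particular two morphisms out of such a wedge coincide as soon as their restrictions (precompositions with the canonical inclusions $\iota_i:\niceS_i\to\niceS$) coincide. I will use this "check summand by summand" principle throughout.

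First I would fix notation. Write $\iota_i:\niceS_i\to\niceS$ for the canonical inclusion, recall that $pr_i$ denotes here the idempotent endomorphism $\niceS\to\niceS_i\xrightarrow{\iota_i}\niceS$, write $j_i:\niceS\to\bigvee_{i\in I}\niceS$ for the inclusion of the $i$-th copy into the target wedge, $q_j:\bigvee_{i\in I}\niceS\to\niceS$ for the projection onto the $j$-th copy, and $\nabla:\bigvee_{i\in I}\niceS\to\niceS$ for the fold map (characterized by $\nabla\circ j_i=id_{\niceS}$ for all $i$). Next I would define the candidate evidence $g:\niceS\to\bigvee_{i\in I}\niceS$ by prescribing its restriction to each source summand, namely $g\circ\iota_i:=j_i\circ\iota_i$; this determines $g$ uniquely by the coproduct property. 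Intuitively, $g$ sends the $i$-th summand $\niceS_i$ of $\niceS$ into the $i$-th copy of $\niceS$ in $\bigvee_{i\in I}\niceS$ via $\iota_i$.

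It then remains to verify the two defining properties of an evidence, and each reduces to a one-line check after restricting to the source summands. For the projection condition: $q_j\circ g\circ\iota_i=q_j\circ j_i\circ\iota_i$, which equals $\iota_i$ when $i=j$ and $0$ when $i\neq j$; this is exactly $pr_j\circ\iota_i$, so $q_j\circ g=pr_j$ by the coproduct property. For the fold condition: $\nabla\circ g\circ\iota_i=\nabla\circ j_i\circ\iota_i=\iota_i=id_{\niceS}\circ\iota_i$ for every $i$, so $\nabla\circ g=id_{\niceS}$. The "in particular" clause is then immediate from the claim that existence of an evidence implies summability together with summation.

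I do not anticipate a genuine obstacle here: the statement is essentially the assertion that a coproduct comes equipped with the tautological "comparison with the diagonal" map. The only points requiring a little care are the bookkeeping distinction between $pr_j$ viewed as a map $\niceS\to\niceS_j$ and the idempotent endomorphism of $\niceS$ that it really denotes in this context (as in the $sel_{C^d}$ maps of Section~\ref{SubSec:Construction}), and the standing fact that small wedges of spectra are categorical coproducts, which legitimizes checking equalities of maps out of a wedge one summand at a time.
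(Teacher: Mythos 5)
Your construction is correct and is exactly the ``straightforward'' argument the paper has in mind (it omits the proof entirely): the evidence map $g:\niceS\to\bigvee_{i\in I}\niceS$ sending the $i$-th summand $\niceS_i$ into the $i$-th copy of $\niceS$ via $\iota_i$, with both defining properties checked summand by summand using that the wedge is the coproduct. Your care in reading $pr_i$ as the idempotent endomorphism $\iota_i\circ(\text{projection})$ of $\niceS$ (as required for the summands and the sum to have the same source and target, and as used for the $sel$ maps in the proof of the main theorem) is also the intended reading.
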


\section{$K$-theory calculation lemmas} \label{App:AppendixB}

We state some lemmas, which will be of use when calculating the concrete symbols. In what follows, $X$ is a Noetherian scheme, $U \subset X$ an open subscheme, and $Z$ the closed complement.

We denote by $SPerf(X)$ the category of (strictly) bounded complexes of $\niceO_X$-modules, whose terms are locally free of finite rank. By $SPerf(X \ on \ Z)$ we denote the full subcategory of $SPerf(X)$ consisting of complexes whose cohomologies are supported on $Z$.

\begin {fact}

There is a natural map from (the geometric realization of) the core groupoid of $SPerf(X)$ to $K(X)$. In particular, every object in $SPerf(X)$ defines a point in $K(X)$. In addition, the automorphism group of any object of $SPerf(X)$ maps into $K_1(X)$; Especially, since $\niceO(X)^{\times}$ maps into the automorphism group of the object $\niceO_X \in SPerf(X)$, one has a map $\niceO(X)^{\times} \to K_1(X)$. Thus, given an object or an automorphism in $SPerf(X)$, one can view it as an element of an appropriate $K$-group $K_i(X)$. We will abuse this without further notice.

\end {fact}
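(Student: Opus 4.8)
The plan is to realize the whole statement as a formal consequence of the functoriality of Waldhausen $K$-theory for the Thomason--Trobaugh model of $K(X)$; there is no hard computation. First I would recall from \cite{TT} that $K(X)$ is the $K$-theory spectrum of the Waldhausen category $\mathrm{Perf}(X)$ of perfect complexes on $X$, with quasi-isomorphisms as weak equivalences, whose underlying infinite loop space is $\Omega\,|wS_\bullet\mathrm{Perf}(X)|$; and that for any Waldhausen category $\niceC$ with weak equivalences $w\niceC$ one has $|wS_0\niceC|\simeq *$, $|wS_1\niceC|=|w\niceC|$, so that passing to the $1$-skeleton of the simplicial space $[n]\mapsto|wS_n\niceC|$ produces a natural map $|w\niceC|\to\Omega|wS_\bullet\niceC|$. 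This is the only $K$-theoretic input.

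\textbf{Step 1 (the map).} I would apply the above to $\niceC=\mathrm{Perf}(X)$, obtaining a natural map from $|w\,\mathrm{Perf}(X)|$ to the infinite loop space of $K(X)$. Every object of $SPerf(X)$ is strictly perfect, hence perfect, and $SPerf(X)$ is a Waldhausen subcategory of $\mathrm{Perf}(X)$, so there is an induced map $|w\,SPerf(X)|\to|w\,\mathrm{Perf}(X)|$; and every isomorphism of complexes is a quasi-isomorphism, so the core groupoid of $SPerf(X)$ is a subcategory of $w\,SPerf(X)$, giving $|\mathrm{core}\,SPerf(X)|\to|w\,SPerf(X)|$. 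The composite of these three maps is the desired natural map $\rho\colon|\mathrm{core}\,SPerf(X)|\to K(X)$. (Equivalently one could form the Waldhausen $K$-theory of $SPerf(X)$ itself and compose with the map $K(SPerf(X))\to K(\mathrm{Perf}(X))=K(X)$ coming from the inclusion.) Naturality of $\rho$ in $X$ under pullback is inherited from that of each constituent, and uses only that $f^{*}$ is exact on strictly perfect complexes.

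\textbf{Step 2 (consequences).} Since $|\mathrm{core}\,SPerf(X)|\simeq\coprod_{[E]}B\,\mathrm{Aut}_{SPerf(X)}(E)$, an object $E$ is a $0$-cell of the source, so $\rho(E)$ is a point of $K(X)$ (a map $\bbS\to K(X)$), equal on $\pi_0$ to the class of $E$ in $K_0(X)$. For fixed $E$ the fundamental group of its component is $\mathrm{Aut}_{SPerf(X)}(E)$, carried by $\rho$ to $\pi_1$ of $K(X)$ based at $\rho(E)$; as the infinite loop space of $K(X)$ is a grouplike $H$-space, its components are canonically homotopy equivalent, so this is canonically $\pi_1$ based at $0$, i.e. $K_1(X)$, yielding $\mathrm{Aut}_{SPerf(X)}(E)\to K_1(X)$. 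Taking $E=\niceO_X$ and precomposing with $\niceO(X)^{\times}\to\mathrm{Aut}_{SPerf(X)}(\niceO_X)$ (a unit going to the corresponding scalar automorphism) gives $\niceO(X)^{\times}\to K_1(X)$.

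The construction is entirely formal, so there is no real obstacle; the two points that want attention are (a) that for a general Noetherian $X$ the inclusion $SPerf(X)\hookrightarrow\mathrm{Perf}(X)$ need not be a $K$-theory equivalence (it is when $X$ admits an ample family of line bundles, \cite{TT}), so I must only use the functorially induced map and not identify the two $K$-theories, and (b) the identification of $\pi_1$ over a non-basepoint component of $K(X)$ with $K_1(X)$ — the routine fact that the components of a grouplike $H$-space are canonically homotopy equivalent.
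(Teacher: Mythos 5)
Your construction is correct, and it is exactly the standard justification implicit in the paper: the Fact is stated without proof, resting on the Thomason--Trobaugh/Waldhausen framework cited in \cite{TT}, where the unit map $|w\niceC|\to\Omega|wS_\bullet\niceC|$ applied to (strictly) perfect complexes yields precisely the map from the core groupoid of $SPerf(X)$ to the $K$-theory space, with objects giving $\pi_0$-classes and automorphism groups mapping to $K_1(X)$. Your two cautionary points --- using only the induced map $K(SPerf(X))\to K(X)$ rather than an equivalence in the absence of an ample family of line bundles, and identifying $\pi_1$ at a non-basepoint component with $K_1(X)$ via the grouplike $H$-space structure --- are well placed and complete the argument.
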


\begin {claim}

Let $X$ be local (i.e., the spectrum of a local ring). Then the above map $\niceO(X)^{\times} \to K_1(X)$ is an isomorphism.

\end {claim}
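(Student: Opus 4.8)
The plan is to reduce the statement to the classical computation that $K_1$ of a local ring is its group of units. Write $A = \niceO(X)$, a local ring with maximal ideal $\liem$ and residue field $k' = A/\liem$. By the comparison results of \cite{TT} (agreement of the $SPerf$-model with Quillen's $K$-theory for an affine scheme, using that over a local ring every perfect complex is quasi-isomorphic to a strictly bounded complex of finite free modules), we have $K_1(X) \cong K_1(A)$ in the sense of Bass--Quillen, i.e. $K_1(A) = GL(A)/E(A)$; and under this identification the map $\niceO(X)^{\times} \to K_1(X)$ coming from viewing a unit $u$ as an automorphism of $\niceO_X \in SPerf(X)$ is precisely the standard map $GL_1(A) \hookrightarrow GL(A) \twoheadrightarrow GL(A)/E(A)$. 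So it suffices to show this last map is an isomorphism for $A$ local.

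For injectivity I would use the determinant. Every elementary matrix has determinant $1$, so the homomorphism $\det : GL(A) \to A^{\times}$ kills $E(A)$ and descends to a homomorphism $K_1(A) \to A^{\times}$; its composition with $GL_1(A) \to K_1(A)$ is the identity of $A^{\times}$. Hence $GL_1(A) \to K_1(A)$ is split injective, and $A^{\times}$ is a direct summand of $K_1(A)$.

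For surjectivity I would show that $SK_1$ of a local ring vanishes, by Gaussian elimination. Given $M \in GL_n(A)$ with $n \ge 2$: reducing modulo $\liem$, the matrix $\bar M \in GL_n(k')$ is invertible, so its first column is nonzero, and therefore some entry $M_{i1}$ of the first column of $M$ is a unit of $A$. Using that $\begin{pmatrix} 0 & 1 \\ -1 & 0 \end{pmatrix}$ (and its higher analogues permuting two rows up to sign) lies in $E_2(A)$, we may bring this unit to the $(1,1)$-slot at the cost of multiplying by an element of $E_n(A)$; then left and right multiplication by elementary matrices clears the remainder of the first column and the first row, yielding a block matrix $\mathrm{diag}(u, M')$ with $u \in A^{\times}$ and $M' \in GL_{n-1}(A)$. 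Induction on $n$ gives $GL_n(A) = GL_1(A)\cdot E_n(A)$ (along the usual stabilization embedding), so every class in $K_1(A)$ is represented by a $1\times 1$ matrix, i.e. comes from $A^{\times}$.

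Combining the two parts, $GL_1(A) \to K_1(A)$ is a bijective homomorphism, hence an isomorphism, and therefore so is $\niceO(X)^{\times} \to K_1(X)$. I expect the only delicate point to be the bookkeeping in the first paragraph — checking that the abstract map out of (the realization of) the core groupoid of $SPerf(X)$ induces, on $\pi_1$, exactly the elementary inclusion $GL_1(A) \to GL(A)/E(A)$ — which is handled by the identifications in \cite{TT} together with Waldhausen's $S_\bullet$-construction; the ring-theoretic content ($\det$-retraction and elimination) is entirely standard.
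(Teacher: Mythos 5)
The paper states this claim without proof, treating it as a standard fact about $K$-theory of local rings, so there is no argument of the authors to compare against; your write-up is the standard proof and it is correct. The reduction to Bass--Quillen $K_1(A)=GL(A)/E(A)$ via the Thomason--Trobaugh comparison for affine schemes, the $\det$-splitting for injectivity, and the vanishing of $SK_1$ of a local ring by Gaussian elimination (an entry of the first column is a unit because the reduction mod $\liem$ is invertible) are exactly the expected ingredients. One small gloss: after reaching $\mathrm{diag}(u,M')$ and applying induction you get a diagonal matrix $\mathrm{diag}(u,u',1,\ldots,1)$ rather than literally an element of $GL_1(A)\cdot E_n(A)$; to finish you should either invoke the Whitehead-lemma fact that $\mathrm{diag}(v,v^{-1})\in E_2(A)$, or simply observe that in $K_1(A)$ the class of $\mathrm{diag}(u,M')$ is the product of the classes of $u$ and $M'$, so induction and the fact that $GL_1(A)\to K_1(A)$ is a group homomorphism already give surjectivity. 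With that sentence added the argument is complete.
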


\begin {lemma} \label{KthLem1}

Let $f \in \niceO(X)$ be such that $f|_{U}$ is invertible. Then the image of $f|_{U} \in \niceO(U)^{\times}$ under the map $K_1(U) \to K_0 (X \ on \ Z)$ which is obtained from the localization sequence (\cite[Theorem 7.4]{TT})

$$ K(X \ on \ Z) \to K(X) \to K(U) $$

is given by the complex

$$ \xymatrixrowsep{0.3pc}\xymatrix{\niceO_X \ar[r]^{f} & \niceO_X .\\ -1 & 0} $$

\end {lemma}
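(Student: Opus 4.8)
The plan is to reduce the statement to a direct computation using the fact that the boundary map in the localization sequence of \cite[Theorem 7.4]{TT} is realized, at the level of complexes, by a Koszul-type construction: an element of $K_1(U)$ represented by an automorphism of a perfect complex on $U$ maps to the class of the cone of a chosen perfect lift of that automorphism over $X$. First I would recall the concrete model for the localization sequence in Thomason--Trobaugh: $K(X \ on \ Z)$ is computed by the complicial biWaldhausen category of perfect complexes on $X$ acyclic on $U$, and the connecting map $K_1(U) \to K_0(X \ on \ Z)$ sends the class of an automorphism $\alpha$ of a perfect complex $P$ on $U$ to the class $[\mathrm{Cone}(\tilde{\alpha})]$, where $\tilde{\alpha}: \tilde{P} \to \tilde{P}$ is any morphism of perfect complexes on $X$ restricting to $\alpha$ on $U$ (such a lift exists after replacing $\tilde P$ by a quasi-isomorphic complex, since $K$-theory is invariant under such replacements).

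The key step is then to apply this to the case at hand: $P = \niceO_U$ sitting in degree $0$, and $\alpha = $ multiplication by $f|_U$, which is an automorphism precisely because $f$ is invertible on $U$. A global lift is simply $\tilde{P} = \niceO_X$ with $\tilde{\alpha} = $ multiplication by $f$, which is a perfectly good endomorphism of a perfect complex on $X$ (no replacement needed). Its cone is the two-term complex $[\niceO_X \xrightarrow{f} \niceO_X]$ placed in degrees $-1, 0$, which is exactly the complex in the statement; and it is acyclic on $U$ since $f|_U$ is invertible there, so it indeed defines a class in $K_0(X \ on \ Z)$. Matching signs and the degree convention (the cone of $\tilde\alpha: \tilde P \to \tilde P$ with $\tilde P$ in degree $0$ has $\tilde P$ in degree $-1$ and $\tilde P$ in degree $0$) gives precisely the displayed complex.

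The main obstacle is purely bookkeeping: pinning down the precise model of the boundary map in \cite{TT} and verifying that the sign/shift conventions there agree with the convention used in writing the complex $[\niceO_X \xrightarrow{f} \niceO_X]$ in degrees $-1, 0$. Once the conventions are fixed, there is nothing left to prove — the lift is literally multiplication by $f$ on $\niceO_X$ and its cone is the asserted complex. I would therefore organize the write-up as: (1) recall the complicial model and the cone description of $\partial: K_1(U) \to K_0(X \ on \ Z)$; (2) observe that $f: \niceO_X \to \niceO_X$ is a global perfect lift of multiplication by $f|_U$; (3) compute its cone and check acyclicity on $U$; (4) reconcile conventions to conclude.
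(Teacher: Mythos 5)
The paper states Lemma \ref{KthLem1} in Appendix \ref{App:AppendixB} without any proof, so there is no argument of the authors to compare against; I can only assess your proposal on its own terms.

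Your strategy is the standard one and does yield the lemma, but note that essentially all of its mathematical content sits in the step you present as a recollection: the description of the connecting map $K_1(U) \to K_0(X \ on \ Z)$ as ``lift the automorphism to $X$ and take the class of the cone''. This is not stated in \cite[Theorem 7.4]{TT}, which only provides the homotopy fiber sequence, so as written your step (1) is a citation gap rather than a proof. The clean way to fill it: Thomason--Trobaugh obtain the sequence (up to the $\pi_0$ cofinality issue, irrelevant for a boundary landing in $K_0$) from Waldhausen's fibration theorem applied to perfect complexes on $X$ with two notions of weak equivalence (global quasi-isomorphisms versus morphisms that become quasi-isomorphisms on $U$); the morphism $f \colon \niceO_X \to \niceO_X$ is a weak equivalence for the coarser notion, hence defines a loop whose image in $\pi_1 K(U)$ is the class of the unit $f|_U$, and the general fact that the boundary of the loop of a weak self-equivalence is the class of its cone in the subcategory of acyclics gives exactly the displayed two-term complex. (Alternatively one can avoid the cone description entirely: since $V(f) \subset Z$, extension of supports and pullback along $X \to \A^1_{\Z}$ reduce the computation to the universal case $(\A^1_{\Z}, \G_m, t)$, where the fundamental theorem identifies the boundary of $t$ with the Koszul generator.) Two smaller points: you do not need existence or independence-of-choice of lifts for a general automorphism of a general perfect complex on $U$ --- only the statement for the specific lift $f$ of $f|_U$ --- so phrasing the key fact for that lift removes the caveats in your parenthetical; and the sign is indeed a normalization of the boundary map, which should be fixed consistently with the paper's use of the lemma (the image of $z$ is $+[k':k]$ in the degree claim), as you correctly flag.
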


\begin {lemma} \label{KthLem2}

Let $f \in \niceO(X)^{\times}$, and $C \in SPerf (X \ on \ Z)$. Then the image of $f \wedge C$ under the product map $K_1(X) \wedge K_0 (X \ on \ Z) \to K_1 (X \ on \ Z)$ is given by the automorphism $C \otimes \niceO_X \xrightarrow{1 \otimes f} C \otimes \niceO_X$.

\end {lemma}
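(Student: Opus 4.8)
The plan is to deduce the statement from the way the multiplicative (module) structure on $K$-theory is built out of the tensor product of perfect complexes, together with the \textbf{Fact} above relating objects and automorphisms of $SPerf$ to points and loops of the $K$-theory spectrum.

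First I would recall that, in the model of \cite{TT}, the pairing $K_1(X) \wedge K_0(X \ on \ Z) \to K_1(X \ on \ Z)$ is induced by the biexact functor $SPerf(X) \times SPerf(X \ on \ Z) \to SPerf(X \ on \ Z)$, $(P,Q) \mapsto P \otimes_{\niceO_X} Q$. This functor is well defined (the tensor product of two strictly bounded complexes of finite-rank locally free sheaves is again one, and if $Q$ has cohomology supported on $Z$ then so does $P \otimes_{\niceO_X} Q$, because the terms of $P$ are flat) and biexact (locally free terms make $\otimes$ preserve admissible monomorphisms and quasi-isomorphisms in each variable separately). Fixing the second argument at $C$ yields an exact functor $T_C := (-\otimes_{\niceO_X} C) : SPerf(X) \to SPerf(X \ on \ Z)$. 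I would then observe that, by the standard recipe producing a pairing of spectra from a biexact functor, the map $(-)\wedge C : K(X) \to K(X \ on \ Z)$ --- that is, smash with the point $[C]\in K_0(X \ on \ Z)$ and then apply the pairing --- coincides with the map $(T_C)_{\ast}$ induced by $T_C$ on $K$-theory spectra. In particular $f \wedge C = (T_C)_{\ast}([f])$ in $K_1(X \ on \ Z)$.

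Next I would use the functoriality, for exact functors, of the map $\mathrm{core}(SPerf(-)) \to K(-)$ from the \textbf{Fact}: applying $T_C$ on core groupoids and passing to $K$-theory gives a homotopy-commuting square, so $(T_C)_{\ast}$ sends the $K_1$-class of an automorphism $\alpha$ of an object $P \in SPerf(X)$ to the $K_1$-class of the automorphism $T_C(\alpha) = \alpha \otimes_{\niceO_X} \Id_C$ of $T_C(P) = P \otimes_{\niceO_X} C$. By the \textbf{Fact}, $[f] \in K_1(X)$ is exactly the class of the automorphism of the object $\niceO_X \in SPerf(X)$ given by multiplication by $f$. Taking $P = \niceO_X$ and $\alpha = f$, we get that $f \wedge C$ is represented by the automorphism $f \otimes_{\niceO_X} \Id_C$ of $\niceO_X \otimes_{\niceO_X} C$; transporting along the canonical isomorphism of complexes $\niceO_X \otimes_{\niceO_X} C \cong C \otimes_{\niceO_X} \niceO_X$ (the symmetry constraint of $\otimes$, which introduces no sign since the factor $\niceO_X$ sits in degree $0$) turns this into the automorphism $1 \otimes f$ of $C \otimes_{\niceO_X} \niceO_X$, which is the claim.

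The step I expect to be the main obstacle is the compatibility invoked in the second paragraph: that evaluating at a fixed object the spectrum-level pairing attached to a biexact functor reproduces the spectrum-level map induced by the resulting one-variable exact functor. This is a formal feature of the construction of multiplicative structures in \cite{TT} (following Waldhausen), but it is where the homotopy-coherence bookkeeping lives; I would isolate and state it precisely with a reference, after which the rest is the purely formal translation through the \textbf{Fact} described above.
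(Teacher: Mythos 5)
The paper itself gives no proof of Lemma \ref{KthLem2}: it is stated in Appendix \ref{App:AppendixB} without argument, as one of the standard $K$-theory calculation facts, so there is nothing to compare against. Your proposal is correct and is the expected justification: the pairing $K_1(X)\wedge K_0(X \ on \ Z)\to K_1(X \ on \ Z)$ comes from the biexact functor $(P,Q)\mapsto P\otimes_{\niceO_X}Q$ (well defined on supports because the terms of $P$ are flat), evaluating the pairing at the point $[C]$ agrees up to homotopy with the map induced by the exact functor $-\otimes_{\niceO_X}C$, and naturality of the core-groupoid-to-$K$-theory map from the Fact then carries the automorphism $f$ of $\niceO_X$ to $f\otimes\Id_C$ on $\niceO_X\otimes_{\niceO_X}C\cong C\otimes_{\niceO_X}\niceO_X$, i.e.\ to $1\otimes f$, with no sign issues since $\niceO_X$ sits in degree $0$. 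You correctly isolate the one non-formal ingredient, namely that the spectrum-level product of \cite{TT} evaluated at a fixed object in one variable reproduces the map induced by the resulting one-variable exact functor; this is a standard compatibility of the Waldhausen-style construction of the pairing and is exactly what should be cited there.
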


\begin {lemma}[Quillen's devissage] \label{LemmaDevissage}
Suppose that $X$ and $Z$ are regular. Then the morphism $K(Z) \to K(X \ on \ Z)$ (induced by pushforward) is an equivalence of spectra.
\end {lemma}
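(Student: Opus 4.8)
The plan is to deduce this from the classical devissage theorem of Quillen, bridging between the $K$-theory of perfect complexes and the $G$-theory of coherent sheaves by means of the two regularity hypotheses. Write $i\colon Z\hookrightarrow X$ for the closed immersion and $U=X\setminus Z$ for the open complement. For a Noetherian scheme $Y$ and a closed subset $W\subseteq |Y|$, let $G(Y\ on\ W)$ denote the $K$-theory of the abelian category of coherent $\niceO_Y$-modules with support contained in $W$ (equivalently, of the category of bounded complexes of coherent $\niceO_Y$-modules whose cohomology is supported on $W$); for $W=|Y|$ this is ordinary $G$-theory $G(Y)$. Note first that $i_*$ of a perfect complex on $Z$ is a bounded complex of coherent $\niceO_X$-modules acyclic over $U$, and it is perfect over $X$ precisely because $X$ is regular -- so the pushforward $K(Z)\to K(X\ on\ Z)$ in the statement is well defined, landing in $SPerf(X\ on\ Z)$.

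Then I would string together three equivalences. First, since $X$ is regular (and of finite Krull dimension, by the standing hypotheses of \ref{AssumptionsOnXandB}), every bounded complex of coherent sheaves on $X$ is perfect, so the inclusion $SPerf(X\ on\ Z)\hookrightarrow D^b_{\mathrm{coh}}(X)_Z$ induces, via the approximation/resolution theorem, an equivalence $K(X\ on\ Z)\xrightarrow{\ \sim\ }G(X\ on\ Z)$ -- this is the ``with supports'' version of the standard comparison $K\simeq G$ for regular schemes (cf. \cite{TT}). Second, Quillen's devissage theorem applied to $i\colon Z\hookrightarrow X$ gives an equivalence $i_*\colon G(Z)\xrightarrow{\ \sim\ }G(X\ on\ Z)$: a coherent $\niceO_X$-module supported on $Z$ is annihilated by a power of the ideal sheaf of $Z_{\mathrm{red}}$, and the adic filtration has coherent $\niceO_{Z_{\mathrm{red}}}$-module subquotients -- no regularity is needed here, and since nilpotents do not affect $G$-theory we may take $Z$ reduced (so ``$Z$ regular'' is read as ``$Z_{\mathrm{red}}$ regular''). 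Third, since $Z$ is regular, the same comparison as in the first step, now with no support condition, yields $K(Z)\xrightarrow{\ \sim\ }G(Z)$. Composing, the chain $K(Z)\xrightarrow{\ \sim\ }G(Z)\xrightarrow{\ i_*,\,\sim\ }G(X\ on\ Z)\xleftarrow{\ \sim\ }K(X\ on\ Z)$ exhibits the desired equivalence.

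It then remains to check that this composite, after inverting the last arrow, is the pushforward map of the statement. This is a compatibility assertion: the comparison maps $K\to G$ of the first and third steps are induced by inclusions of Waldhausen subcategories, and $i_*$ on perfect complexes is carried by these inclusions to $i_*$ on coherent complexes; hence the evident square with vertices $K(Z)$, $G(Z)$, $K(X\ on\ Z)$, $G(X\ on\ Z)$ -- horizontal arrows the $K\to G$ comparisons, vertical arrows the two pushforwards -- commutes, which gives the claim. I expect the genuine homotopy-theoretic content (devissage, and $K\simeq G$ for regular schemes) to be entirely citeable; the main thing requiring care is precisely this last bookkeeping -- matching the model $SPerf(X\ on\ Z)$ used throughout the paper with the coherent models entering devissage, and pinning down the meaning of ``$Z$ regular'' -- together with verifying that the Noetherian and finite-dimensionality hypotheses needed for the $K\simeq G$ comparison ``with supports'' are indeed guaranteed by the assumptions on $X$.
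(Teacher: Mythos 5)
The paper gives no proof of this lemma at all --- it is stated in Appendix B as a citation-level standard fact --- and your argument is exactly the standard deduction: $K\simeq G$ with supports for the regular ambient $X$, Quillen's devissage in $G$-theory for $Z_{\mathrm{red}}\hookrightarrow X$ (where ``$Z$ regular'' is read for the reduced structure), $K\simeq G$ for the regular $Z$, and the evident compatibility square identifying the composite with the pushforward; this is correct. The one bookkeeping point to keep straight is that the finite Krull dimension you need for ``bounded coherent $=$ perfect'' is not literally supplied by the hypotheses of \ref{AssumptionsOnXandB} (Appendix B only assumes $X$ Noetherian), but it does hold in every instance where the paper invokes the lemma (spectra of regular local Noetherian rings and their closed subschemes), so the proof goes through as you describe.
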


%%%%%%%%%%%%%%%%%%%%%%%%%%%%%%%%%%%%%%%%%%%%%%%%%%%%              %%%%%%%%%%%%%%%%%%%%%%%%%%%%%%%%%%%%%%%%%%%%%%%%%%%%
%%%%%%%%%%%%%%%%%%%%%%%%%%%%%%%%%%%%%%%%%%%%%%%%%%%% BIBLIOGRAPHY %%%%%%%%%%%%%%%%%%%%%%%%%%%%%%%%%%%%%%%%%%%%%%%%%%%%
%%%%%%%%%%%%%%%%%%%%%%%%%%%%%%%%%%%%%%%%%%%%%%%%%%%%              %%%%%%%%%%%%%%%%%%%%%%%%%%%%%%%%%%%%%%%%%%%%%%%%%%%%

\pagebreak


\begin{thebibliography}{10}

\bibitem{B1}{
A. Beilinson. \emph{Residues and adeles}. Funkt. Anal. Pril. 14 (1980) no. 1, 44-45; English trans. in Func. Anal. Appl. 14 (1980) no. 1, 34-35.
}

\bibitem{B2}{
A. Beilinson. \emph{How to glue perverse sheaves}. K-theory, Arithmetic and Geometry (Moscow,
1984), LNM 1289, Springer, 1987, pp. 42–51.
}

\bibitem{BGW1}{
Oliver Braunling, Michael Groechenig, Jesse Wolfson. \emph{Tate Objects in Exact Categories}. arXiv:1402.4969 [math.KT], 2014.
}

\bibitem{BGW2}{
Oliver Braunling, Michael Groechenig, Jesse Wolfson. \emph{A Generalized Contou-Carr\`{e}re Symbol and its Reciprocity Laws in Higher Dimensions}. arXiv:1410.3451 [math.AG], 2014.
}

\bibitem{Cl}{
D. Clausen. \emph{p-adic J-homomorphisms and a product formula}. arXiv:1110.5851 [math.AT], 2012.
}

\bibitem{CC}{
C. Contou-Carr\`{e}re. \emph{Jacobienne locale, groupe de bivecteurs de Witt universel, et symbole mod\'{e}r\'{e}}. C. R. Acad. Sci. Paris S\'{e}r. I Math. 318 (1994), no. 8, 743–746.
}

\bibitem{Sa}{
S. Saito. \emph{On Previdi's delooping conjecture for K-theory}. arXiv:1203.0831 [math.KT], 2013.
}

\bibitem{Ser}{
J-P. Serre. \emph{Algebraic Groups and Class Fields}. Springer Verlag, New York (1988).
}

\bibitem{Sn}{
V. Snaith. \emph{On the Localisation Sequence in $K$-Theory}. Proceedings of the American Mathematical Society, Vol. 79, No. 3 (1980)   
}

\bibitem{Sop}{
I. Soprounov. \emph{Parshin’s symbols and residues, and newton polyhedra}. PhD Thesis (2002).
}

\bibitem{TT}{
R. Thomason, T. Trobaugh. \emph{Higher algebraic K-theory of schemes and of derived categories}. The Grothendieck Festschrift, Vol. III, Progr. Math. 88, Boston, MA: Birkh\"{a}user Boston, pp. 247–435 (1990).
}



\end{thebibliography}
\end{document}